\theoremstyle{plain}
\newtheorem{theorem}{Theorem}[section]
\newtheorem{lemma}[theorem]{Lemma}
\newtheorem{corollary}[theorem]{Corollary}
\newtheorem{proposition}[theorem]{Proposition}
\theoremstyle{definition}
\newtheorem{definition}[theorem]{Definition}
\theoremstyle{remark}
\newtheorem*{remark*}{Remark}
\newtheorem{question}[theorem]{Question}
\newtheorem{conjecture}[theorem]{Conjecture}
\numberwithin{figure}{section}
\newcommand{\Int}{\mathrm{int}}
\begin{document}

\title{Taut foliations in knot complements}
\author{Tao Li}
\address{Department of Mathematics \\
 Boston College \\
 Chestnut Hill, MA 02467}
\email{taoli@bc.edu}
\thanks{Partially supported by an NSF grant}

\author{Rachel Roberts}
\address{Department of Mathematics \\
 Washington University \\
 St. Louis, MO 63130}
\email{roberts@math.wustl.edu}

\date{\today}

\begin{abstract}
We show that for any nontrivial knot in $S^3$, there is an open interval containing zero such that a Dehn surgery on any slope in this interval yields a 3-manifold with taut foliations.  This generalizes a theorem of Gabai on zero frame surgery.
\end{abstract}

\maketitle

\section{Introduction}\label{Sintro}

A transversely orientable codimension one foliation $\mathcal F$ of a 3-manifold $M$ is called {\it taut} \cite{G4} if  every leaf of $\mathcal F$ intersects some closed transverse curve. The existence of a taut foliation in a 3-manifold $M$ provides much interesting topological information about both $M$ and objects embedded in $M$.  If a closed 3-manifold $M$ contains a taut foliation, then either $M$ is finitely covered by $S^2\times S^1$ or $M$ is irreducible \cite{Novikov,Reeb,Rosenberg}.
If a closed 3-manifold $M$ contains a taut foliation, then its fundamental group is infinite \cite{Haefliger, Novikov, GO} and acts nontrivially on interesting 1-dimensional objects (see, for example, \cite{T, CD} and \cite{Palmeira, RSS}), and its universal cover is $\mathbb R^3$ \cite{Palmeira}. Taut foliations can be perturbed to interesting contact structures \cite{ET} and hence can be used to obtain Heegaard-Floer information \cite{OzSz}.
In this paper we seek to add to the understanding of the existence of taut foliations by describing a new  construction of taut foliations.

Let $k$ be a nontrivial knot in $S^3$.  In his proof of the Property R conjecture \cite{G3}, Gabai showed that the knot exterior $M=S^3\setminus int N(k)$ has a taut foliation whose restriction to the torus $\partial M$ is a collection of circles of slope $0$.  Thus a zero frame Dehn surgery on $k$ yields a closed 3-manifold that admits a taut foliation obtained by adding disks along the boundary circles of the taut foliation of $M$.  In this paper, we extend Gabai's theorem from zero frame surgery to any slope in an interval that contains $0$. 
Although we restrict attention to knots in $S^3$, the approach described in this paper applies more generally to manifolds $(M,\partial M)$ with boundary a nonempty union of tori and for which there exists a well-groomed sutured manifold hierarchy which meets each component of $\partial M$ only in essential simple closed curves.

\begin{theorem}\label{Tmain}
Let $k$ be a nontrivial knot in $S^3$. Then there is an interval $(-a,b)$, where $a>0$ and $b>0$, such that for any slope $s\in(-a,b)$, the knot exterior $M=S^3\setminus \Int(N(k))$ has a taut foliation whose restriction to the torus $\partial M$ is a collection of circles of slope $s$.  Moreover, by attaching disks along the boundary circles, the foliation can be extended to a taut foliation in $M(s)$, where $M(s)$ is the manifold obtained by performing Dehn surgery to $k$ with surgery slope $s$. 
\end{theorem}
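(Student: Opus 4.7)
The plan is to upgrade Gabai's construction of a taut foliation having longitudinal boundary slope to a family of taut foliations whose boundary slope varies over an interval around $0$. The starting point is a well-groomed sutured manifold hierarchy for $(M, \gamma_0)$, where $\gamma_0$ consists of two oppositely oriented longitudes on $\partial M$. Such a hierarchy
\[
(M, \gamma_0) \to (M_1, \gamma_1) \to \cdots \to (M_n, \gamma_n)
\]
exists by Gabai's sutured manifold theory and terminates in a union of balls with disk sutures. From this hierarchy, Gabai builds a taut foliation $\mathcal F_0$ of $M$ by reverse induction: start with product foliations on $M_n$ and extend across each decomposing surface. The restriction of $\mathcal F_0$ to $\partial M$ is by longitudinal circles.

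To obtain boundary slopes in an interval around $0$, the plan is to carry the decomposing surfaces of the hierarchy on a branched surface $B \subset M$, chosen so that near $\partial M$ the sectors of $B$ meet $\partial M$ transversally in curves parallel to $\gamma_0$. The key modification is to equip each product region adjacent to $\partial M$ with a collar $T^2 \times [0,1]$ in which the product foliation is perturbed by spiraling its leaves so they meet $T^2 \times \{0\} = \partial M$ along circles of slope $s$ instead of $0$. For $|s|$ small, such a spiraling perturbation exists and can be made $C^0$-close to the original product foliation away from $\partial M$. This produces a family of foliations $\mathcal F_s$ on the pieces, with boundary slope $s$, still matching the sutures along the cut surfaces of the hierarchy.

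Assembling the pieces proceeds as in Gabai's reverse induction, and well-groomedness is essential here: each decomposing surface meets $\partial M$ in essential curves parallel to $\gamma_0$, so the collar spiraling can be applied coherently across all levels of the hierarchy. The resulting foliation $\mathcal F_s$ of $M$ is transversely orientable with boundary a union of slope-$s$ circles, and tautness is verified by exhibiting a closed transversal: a slight perturbation of a closed transversal of $\mathcal F_0$ still meets every leaf of $\mathcal F_s$, because the perturbation is small away from $\partial M$. Since every boundary leaf of $\mathcal F_s$ is a slope-$s$ circle, capping by meridian disks in the surgery solid torus extends $\mathcal F_s$ to a foliation of $M(s)$, which remains taut because the closed transversal meets each new disk leaf transversally.

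The main obstacle is controlling the allowable range of $s$, i.e.\ showing that spiraling perturbations can be performed consistently across all finitely many levels of the hierarchy without creating Reeb components or violating the matching conditions along the cut surfaces. This requires a uniform lower bound on the spiral angle that can be accommodated at every level, depending on the geometry of how the decomposing surfaces sit near $\partial M$; that uniform bound is what ultimately furnishes the strictly positive numbers $a$ and $b$ in the conclusion.
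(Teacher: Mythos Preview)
Your proposal has a genuine gap at the central step: the ``spiraling perturbation in a collar'' cannot do what you claim. If the leaves of $\mathcal F_0$ meet $\partial M$ in longitudes, then in any collar $T^2\times[0,1]$ the leaves are annuli whose two boundary circles are homologous in $T^2\times[0,1]$. An annulus with one boundary component a longitude on $T^2\times\{1\}$ and the other a slope-$s$ circle on $T^2\times\{0\}$ does not exist for $s\ne 0$, since those curves are not homologous. So no local modification in a collar can change the boundary slope while keeping the restriction to $\partial M$ a foliation by circles; any attempt to ``spiral'' produces noncompact leaves or a Kronecker foliation on $\partial M$, not slope-$s$ circles. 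The boundary slope is not a parameter you can vary by a $C^0$-small perturbation --- it is a discrete topological datum, and changing it requires a global modification of how the leaves are assembled. Your final paragraph essentially acknowledges this: you identify the obstacle and assert that a uniform bound exists, but that assertion is the entire content of the theorem.

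The paper's argument is structurally different. It encodes the hierarchy in a branched surface $B_n'$ and then, after a splitting step to make $B_n'$ \emph{good}, adds product disk sectors along carefully chosen \emph{good arcs} in $\partial_hN(B_n')$ running from $\partial M$ to internal boundary. These disks are global objects, not supported in a collar, and they are precisely what allows the boundary train track to carry nonzero slopes. The technical heart is Lemma~4.2 (showing one can split so that every horizontal boundary component admits good arcs whose projections to the branched surface are disjoint) together with the verification that the resulting branched surfaces $B_G$, $B_G'$ have no sink disks and carry no torus. Theorem~2.2 then yields laminations with the desired boundary slopes, and these extend to taut foliations because all complementary regions are products. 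The interval $(-a,b)$ comes from an explicit computation on the boundary train tracks, not from a perturbation bound.
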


A group $G$ is called {\it left-orderable} if there is a total order on $G$ which is invariant under left multiplication. We thank Liam Watson for calling our attention to the following results.

\begin{corollary}\label{liam}
Let $k$ be a hyperbolic knot in $S^3$ and let $M(1/n)$ denote the manifold obtained by $1/n$ Dehn filling along $k$. Then there is some number $N=N(k)$ such that  $\pi_1(M(1/n))$ is left-orderable whenever $|n|>N$.
\end{corollary}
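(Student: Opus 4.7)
My plan is to deduce the corollary by combining Theorem~\ref{Tmain} with the universal circle machinery of Calegari--Dunfield. First, I apply Theorem~\ref{Tmain} to obtain an open interval $(-a,b)$ with $a,b>0$ such that $M(s)$ admits a co-orientable taut foliation for every slope $s\in(-a,b)$. Choosing $N_{0}$ so that $|1/n|<\min(a,b)$ whenever $|n|>N_{0}$, the manifold $M(1/n)$ carries a taut foliation for all such $n$. Since $k$ is hyperbolic, Thurston's hyperbolic Dehn surgery theorem gives $N_{1}$ such that $M(1/n)$ is hyperbolic --- hence closed, orientable, aspherical, and atoroidal --- whenever $|n|>N_{1}$; set $N=\max(N_{0},N_{1})$. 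Moreover, since $k\subset S^{3}$ is a knot, $1/n$-surgery preserves $H_{1}$, so each $M(1/n)$ is an integer homology sphere.

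Next, I feed the taut foliation into the main theorem of Calegari--Dunfield to obtain a faithful representation $\rho\colon \pi_{1}(M(1/n))\to\mathrm{Homeo}^{+}(S^{1})$. The obstruction to lifting $\rho$ to an action on $\mathbb{R}$ by orientation-preserving homeomorphisms is the pullback of the universal Euler class, which lives in $H^{2}(\pi_{1}(M(1/n));\mathbb{Z})$. Asphericity of $M(1/n)$ gives $H^{2}(\pi_{1}(M(1/n));\mathbb{Z})\cong H^{2}(M(1/n);\mathbb{Z})$, and Poincar\'e duality together with $H_{1}(M(1/n);\mathbb{Z})=0$ forces this group to vanish. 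Hence the lift exists and is automatically faithful, and left-orderability of $\pi_{1}(M(1/n))$ follows by the standard argument that any countable subgroup of $\mathrm{Homeo}^{+}(\mathbb{R})$ is left-orderable.

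The main point that needs attention is to verify that the taut foliations produced by Theorem~\ref{Tmain} satisfy the precise hypotheses required by Calegari--Dunfield (co-orientable, without sphere leaves, ambient manifold atoroidal). These are automatic from the construction of Theorem~\ref{Tmain} and the hyperbolicity of $M(1/n)$ for $|n|>N$, so I do not anticipate a serious obstacle. A marginal concern is that one might prefer to cite a direct packaging of this implication for integer homology spheres (for instance via Boyer--Rolfsen--Wiest), but the Euler-class argument above should suffice regardless.
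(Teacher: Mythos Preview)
Your proposal is correct and follows essentially the same route as the paper: homology sphere plus Thurston's hyperbolic Dehn surgery theorem plus Theorem~\ref{Tmain} plus Calegari--Dunfield. The only difference is cosmetic: the paper cites Corollary~7.6 of \cite{CD} as a black box, whereas you unpack its content (the universal circle action, the Euler class obstruction in $H^{2}$, and the lift to $\mathrm{Homeo}^{+}(\mathbb{R})$).
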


\begin{proof} 
The surgered manifold $M(1/n)$ is a homology $S^3$ and, by 
Thurston's hyperbolic Dehn surgery theorem \cite{Th0},  atoroidal when $|n|$ is sufficiently large
 (or, equivalently, when $1/n$ is sufficiently small).
Moreover,  by Theorem~\ref{Tmain},
$M(1/n)$ 
  contains a transversely oriented taut foliation whenever $1/n$ is sufficiently close to $0$. It therefore follows from
Corollary 7.6 of \cite{CD}  that $\pi_1(M(1/n))$ is left-orderable. 
\end{proof}

Ozsv\'ath and  Szab\'o  defined the Heegaard Floer homology group $\widehat{HF}(Y)$ of a 3-manifold $Y$ in \cite{OzSz1,OzSz2}. In \cite{OzSz3}, they define  L-space as follows.

\begin{definition} (Definition 1.1, \cite{OzSz3})
A closed three-manifold is called an {\it L-space} if $H_1(Y;\mathbb Q)=0$ and $\widehat{HF}(Y)$ is  free abelian group of rank $|H_1(Y;\mathbb Z)|$.
\end{definition}

\noindent
L-spaces are therefore the closed 3-manifolds with the simplest possible Heegaard Floer homology groups and the following is an important open question:
\begin{question} (Question 11, \cite{OzSz4})
Is there a topological characterization of L-spaces (i.e. which makes no reference to Floer homology)?
\end{question}
\noindent
Recently, Hedden and Levine proposed the following partial answer to this question.
\begin{conjecture} (Conjecture 1, \cite{HL})\label{HL}
If $Y$ is an irreducible homology sphere that is an L-space, then $Y$ is  homeomorphic to either $S^3$ or the Poincar\'{e} homology sphere.
\end{conjecture}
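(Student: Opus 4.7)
The plan is to leverage the theorem of Ozsv\'ath--Szab\'o (combining their Heegaard Floer obstruction with the Eliashberg--Thurston perturbation \cite{ET}, \cite{OzSz}) that a closed oriented $3$-manifold carrying a coorientable taut foliation is not an L-space. Under this reduction, proving Conjecture~\ref{HL} amounts to showing that every irreducible homology $3$-sphere other than $S^3$ and the Poincar\'e sphere $\Sigma(2,3,5)$ admits a coorientable taut foliation.

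First I would stratify by geometric type. In the Seifert fibered case the classical results of Eisenbud--Hirsch--Neumann, Jankins--Neumann, and Naimi characterize which Seifert fibered homology spheres support horizontal taut foliations, and only $\Sigma(2,3,5)$ fails, in exact agreement with the conjecture. In the toroidal case I would try to glue taut foliations across the JSJ tori using branched-surface and lamination machinery in the spirit of Gabai--Oertel, together with boundary-slope matching arguments, so that each atoroidal piece contributes a taut foliation whose boundary slopes align with those of its neighbors.

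For the hyperbolic case, Theorem~\ref{Tmain} is the key new input. If $Y$ is a hyperbolic homology sphere presented as Dehn surgery on a nontrivial knot $k\subset S^3$ with slope $s\in(-a,b)$, then Theorem~\ref{Tmain} directly produces a coorientable taut foliation on $Y$, so $Y$ is not an L-space. Corollary~\ref{liam} is precisely this observation for the family $Y=M(1/n)$ with $|n|$ sufficiently large; the broader strategy is to cover every hyperbolic homology sphere by at least one surgery description on some nontrivial knot in $S^3$ whose slope lands in the corresponding taut-foliation interval.

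The main obstacle is that the interval $(-a,b)$ in Theorem~\ref{Tmain} is neither explicit nor known to be wide, so a fixed hyperbolic homology sphere need not admit any surgery description with slope inside it, and indeed not every irreducible homology sphere is known to arise as surgery on a knot in $S^3$ at all. Completing the proof would therefore require either substantially enlarging the interval in Theorem~\ref{Tmain}, ideally to capture all boundary slopes of taut foliations on the knot exterior, or constructing taut foliations on arbitrary hyperbolic irreducible homology spheres by intrinsic methods independent of a surgery presentation. Both tasks appear at least as hard as the conjecture itself, so the realistic outcome of this plan is substantial partial progress (confirming the conjecture for large families of surgeries and for the Seifert fibered and toroidal strata) rather than a full resolution.
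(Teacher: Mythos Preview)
The statement you are addressing is presented in the paper as an \emph{open conjecture} (attributed to Hedden and Levine \cite{HL}), not as a theorem, and the paper makes no attempt to prove it in general. The only claim the paper makes about Conjecture~\ref{HL} is the one-line remark that it holds for homology spheres obtained by surgery on knots in $S^3$; this follows immediately from Proposition~\ref{hw} (Ozsv\'ath--Szab\'o, Hedden--Watson), since any integral homology sphere arising as nontrivial surgery on a knot in $S^3$ is a $1/n$-surgery, and Proposition~\ref{hw} identifies the only L-space among these as the Poincar\'e sphere. Note that this special case uses Heegaard Floer input directly and does \emph{not} pass through taut foliations or Theorem~\ref{Tmain}.

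Your proposal is not a proof either, as you yourself concede in the final paragraph: the obstacles you list---the interval $(-a,b)$ in Theorem~\ref{Tmain} is neither explicit nor known to be wide, not every irreducible homology sphere is known to arise as surgery on a knot in $S^3$, and constructing taut foliations on arbitrary hyperbolic homology spheres by intrinsic methods is wide open---are genuine and, as you say, at least as hard as the conjecture itself. So there is nothing to compare against: both the paper and you treat this as an open problem. Your outline is a reasonable sketch of a possible strategy and of the known partial results, but it should not be labeled a proof, and the paper never claimed one.
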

\noindent
Approaches to understanding L-spaces have included investigations into the following two questions. 
Are  L-spaces exactly those irreducible rational homology 3-spheres which contain no transversely oriented taut foliation? Are L-spaces exactly those irreducible rational homology 3-spheres which have non-left-orderable fundamental groups?  (See   \cite{BGW} for a nice survey.)

Recently, Boyer, Gordon and Watson made the following conjecture.

\begin{conjecture} (Conjecture 1, \cite{BGW})\label{gbw}
An irreducible rational homology 3-sphere is an L-space if and only if its fundamental group is not left-orderable.
\end{conjecture}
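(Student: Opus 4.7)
The plan is to attack Conjecture~\ref{gbw} by introducing the existence of a transversely oriented taut foliation on $Y$ as an intermediate bridge between the L-space property and left-orderability of $\pi_1(Y)$. Each of these two properties already has a known one-way connection to taut foliations, so the strategy is to close the two loops and prove both directions of the biconditional by routing through foliations.

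For the implication ``$\pi_1(Y)$ left-orderable $\Rightarrow$ $Y$ is not an L-space,'' I would start from a left-invariant order on $\pi_1(Y)$ and produce a fixed-point-free action on $\mathbb{R}$ by order-preserving homeomorphisms. The goal would then be to thicken this action to a codimension-one essential lamination and ultimately to a transversely oriented taut foliation of $Y$. Once such a foliation is in hand, the Eliashberg--Thurston perturbation to a tight contact structure combined with the Ozsv\'{a}th--Szab\'{o} nonvanishing of the contact invariant forces $\widehat{HF}(Y)$ to have rank strictly greater than $|H_1(Y;\mathbb{Z})|$, contradicting the L-space condition.

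For the converse ``$Y$ not an L-space $\Rightarrow$ $\pi_1(Y)$ left-orderable,'' I would attempt to build a taut foliation on $Y$ directly. In the special case where $Y=M(s)$ is obtained by Dehn surgery on a nontrivial knot $k\subset S^3$ with $s\in(-a,b)$, Theorem~\ref{Tmain} supplies the required taut foliation at no extra cost, and Corollary 7.6 of \cite{CD} then immediately yields left-orderability of $\pi_1(M(s))$. This confirms the conjecture on a small interval of surgery slopes around zero and motivates extending the construction of Theorem~\ref{Tmain} to broader classes of three-manifolds, for example via the more general well-groomed sutured hierarchy setting alluded to after the theorem.

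The hard part, and the reason Conjecture~\ref{gbw} is famously open, is that neither step of the bridge is known in general. There is no construction of a taut foliation starting from the mere failure of the L-space condition, and the passage from a left order to a codimension-one foliation is not known outside of special settings (the Calegari--Dunfield theorem only goes from foliation to order). A complete proof would likely require either a substantially stronger foliation existence theorem that detects every non-L-space, or a genuinely new Heegaard-Floer-theoretic mechanism for producing invariant orders on fundamental groups that bypasses foliations altogether.
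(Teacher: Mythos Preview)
The statement you were asked to prove is Conjecture~\ref{gbw}, and the paper does not prove it; it is quoted from \cite{BGW} as an open conjecture and used only as motivation. There is therefore no ``paper's own proof'' against which to compare your proposal.

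Your write-up is not a proof but a program, and you are honest about this: you correctly identify both missing bridges (no known construction of a taut foliation from the failure of the L-space condition, and no known passage from a left order to a taut foliation in general) and explicitly say the conjecture is open. That assessment is accurate, and your outline of the known one-way implications (taut foliation $\Rightarrow$ not an L-space via Eliashberg--Thurston and Ozsv\'ath--Szab\'o; taut foliation $\Rightarrow$ left-orderable via \cite{CD} under suitable hypotheses) is correct in spirit.

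One technical slip: when you invoke Corollary~7.6 of \cite{CD} for $M(s)$ with $s\in(-a,b)$, you omit the hypothesis that the manifold be atoroidal (and a rational homology sphere). The paper is careful about this in Corollary~\ref{liam}, restricting to hyperbolic $k$ and $1/n$ surgeries with $|n|$ large so that Thurston's hyperbolic Dehn surgery theorem guarantees atoroidality. Without that, the Calegari--Dunfield argument does not apply as stated.
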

\noindent
With Conjecture~\ref{gbw} in mind, we compare  Corollary~\ref{liam} with the following result, which appears in various contexts (Corollary 1.3 \cite{OzSz}, Corollary 1.5 \cite{Gh}), but is stated most conveniently  as Proposition 5 in \cite{HW}.

\begin{proposition} (\cite{OzSz}, \cite{HW})\label{hw}
Suppose $k$ is a nontrivial knot in $S^3$ and let $M(1/n)$ denote the manifold obtained by $1/n$ Dehn filling along $k$. If $M(1/n)$ is an L-space, then either $n=1$ and $k$ is the right-handed trefoil or $n=-1$ and $k$ is the left-handed trefoil.
\end{proposition}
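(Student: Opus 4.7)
The plan is to combine three now-standard results about L-space surgeries on knots in $S^3$. First, I would invoke the Ozsv\'ath-Szab\'o genus bound: if $p/q$ surgery on $k$ is an L-space with $p,q>0$, then $p/q \geq 2g(k)-1$. Applied to $1/n$ surgery with $n \geq 1$, this forces $1/n \geq 2g(k)-1$. Since $k$ is nontrivial we have $g(k)\geq 1$, so $2g(k)-1 \geq 1$, which forces $n=1$ and $g(k)=1$. The case $n \leq -1$ would reduce to the positive case by passing to the mirror $\overline{k}$: $1/n$ surgery on $k$ and $1/|n|$ surgery on $\overline{k}$ yield orientation-reversed 3-manifolds, and being an L-space is preserved under orientation reversal.

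Next, I would apply Ni's theorem that every L-space knot in $S^3$ is fibered. Since $g(k)=1$, the knot $k$ is a fibered knot of genus one; up to mirror image the only such knots in $S^3$ are the two trefoils and the figure-eight knot.

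To eliminate the figure-eight, I would use the Ozsv\'ath-Szab\'o constraint on the Alexander polynomial of an L-space knot: its nonzero coefficients are all $\pm 1$ and alternate in sign. The figure-eight has Alexander polynomial $-t+3-t^{-1}$, whose middle coefficient $3$ violates this criterion, so $k$ must be a trefoil. The final step is to pin down the chirality: the right-handed trefoil admits an L-space $p/q$ surgery precisely when $p/q \geq 1$, and in particular $+1$ surgery on it yields the Poincar\'e homology sphere, whereas the left-handed trefoil admits no positive L-space surgery. Thus $n=1$ forces $k$ to be the right-handed trefoil, and combined with the mirror reduction from the first paragraph, $n=-1$ forces $k$ to be the left-handed trefoil.

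The main obstacle is really just assembling the right ingredients from Heegaard Floer theory; once the genus bound, Ni's fiberedness theorem, and the Alexander polynomial criterion are in hand, the chain of implications is very tight. The most delicate point will be verifying that the figure-eight knot admits no L-space surgery whatsoever (not merely none of the form $1/n$), but this is in fact immediate from the Alexander polynomial criterion applied to $\Delta_{4_1}(t)$.
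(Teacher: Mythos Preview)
Your proposal is a correct reconstruction of the standard Heegaard Floer argument, but note that the paper does not actually provide its own proof of this proposition: it is quoted verbatim as Proposition~5 of Hedden--Watson, with attribution also to Ozsv\'ath--Szab\'o, and is used as a black box. So there is nothing in the paper to compare against beyond the citation.

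That said, your outline is sound. The chain $1/n \ge 2g(k)-1 \Rightarrow n=1,\ g(k)=1$, followed by Ni's fiberedness theorem, the classification of genus-one fibered knots in $S^3$, the Alexander polynomial constraint ruling out the figure-eight, and the chirality check via the Poincar\'e sphere, is exactly how this result is typically derived from the Ozsv\'ath--Szab\'o machinery. One small redundancy: once you know $g(k)=1$ and $k$ is an L-space knot, the Alexander polynomial constraint already forces $\Delta_k(t)=t-1+t^{-1}$, so you could bypass the explicit list of genus-one fibered knots and appeal directly to Ghiggini's theorem (cited in the paper as \cite{Gh}) that knot Floer homology detects the trefoil. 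Either route works.
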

\noindent
It follows that Conjecture~\ref{HL} holds for 3-manifolds obtained by surgery on knots in $S^3$. And
it follows from Corollary~\ref{liam} and Proposition~\ref{hw} that Conjecture~\ref{gbw} holds for 3-manifolds obtained by $1/n$ surgery on the complement of hyperbolic knots when $|n|$ is sufficiently large.

In Theorem~\ref{Tmain}, the interval $(-a,b)$ depends both on the knot $k$ and on the sutured manifold decomposition in \cite{G3}.  In \cite{R1, R2}, it is shown that if $k$ is a fibered hyperbolic knot (not necessarily in $S^3$), then this interval can always be chosen to contain $(-1,\infty), (-\infty,1)$, or $(-\infty,\infty)$. 
Moreover, the values of $a$ and $b$ in a maximal such interval $(-a,b)$ reveal information about the pseudo-Anosov monodromy and hence the
geometry of $M$.


\begin{question} 
Let $k$ be a nontrivial knot in $S^3$. What is the maximal interval $(-a,b)$, where $a>0$ and $b>0$, such that for any slope $s\in(-a,b)$, the knot exterior $M=S^3\setminus \Int(N(k))$ has a taut foliation whose restriction to the torus $\partial M$ is a collection of circles of slope $s$ and such that, by attaching disks along the boundary circles, the foliation can be extended to a taut foliation in $M(s)$, where $M(s)$ is the manifold obtained by performing Dehn surgery to $k$ with surgery slope $s$?
\end{question}

\begin{conjecture}
Such a maximal interval will always contain $(-1,1)$.
\end{conjecture}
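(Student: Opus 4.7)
The plan is to sharpen the construction of Theorem~\ref{Tmain} so that the interval $(-a,b)$ always contains $(-1,1)$. The choice of $(-1,1)$ is natural for two reasons. Topologically, a minimum-genus Seifert surface $S$ for $k$ has boundary slope $0$, and one Dehn twist of $S$ along a boundary-parallel annulus shifts its slope by $\pm 1$; thus $(-1,1)$ is the range of slopes realizable by ``less than one Seifert twist'' of the starting surface. Heegaard-Floer theoretically, Proposition~\ref{hw} shows that slope $\pm 1$ surgery yields a non-L-space unless $k$ is a trefoil, and for the trefoils exactly one of the values $\pm 1$ gives the Poincar\'e homology sphere; so Conjecture~\ref{gbw} predicts that $(-1,1)$ is the largest open interval about $0$ in which taut foliations must exist for every nontrivial $k$.

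Building on the method of the current paper, I would proceed as follows. First, fix a minimum-genus Seifert surface $S$ for $k$ and choose a well-groomed sutured manifold hierarchy $(M,\gamma_0) \rightsquigarrow (M_1,\gamma_1) \rightsquigarrow \cdots \rightsquigarrow (M_n,\gamma_n)$ beginning with decomposition along $S$, following Gabai. Next, construct a branched surface $B \subset M$ carrying a $1$-parameter family of taut foliations whose boundary slopes on $\partial M$ vary continuously; the novel input, relative to Theorem~\ref{Tmain}, is to engineer an explicit twist region near $\partial M$ in which $B$ carries all boundary slopes in a prescribed symmetric interval about $0$ of length $2$. Finally, attach meridian disks along the boundary circles of slope $s$ to extend each such foliation to a taut foliation in $M(s)$ for every $s \in (-1,1)$. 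The fibered case treated in \cite{R1,R2}, which yields much larger intervals, serves as a guide: the Roberts construction twists along fibers, and the natural finite bound $(-1,1)$ in the general setting reflects the loss of global periodicity in passing from a genuine fibration to a sutured manifold hierarchy.

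The main obstacle is controlling the interval of boundary slopes carried by $B$ as one ascends the hierarchy. In Theorem~\ref{Tmain} one assembles compatible laminations from the bottom of the hierarchy up to $M$, and the interval of slopes that survives can shrink at each gluing stage; the bound $(-a,b)$ reflects the cumulative loss. To guarantee $(-1,1)$ one must either establish a universal per-stage estimate or, more plausibly, exhibit a preferred hierarchy whose decomposition surfaces meet $\partial M$ in a prescribed model way, so that the accrued boundary slope can be shifted by up to one full integer twist at the top. A secondary difficulty is sharpness: to remain consistent with the trefoil examples, the carried foliation must degenerate in a controlled fashion as $s \to \pm 1$ (for instance by developing Reeb-like structure along $\partial M$), and the branched surface framework must accommodate this limiting behavior without obstructing tautness on the open interval $(-1,1)$.
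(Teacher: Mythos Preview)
This statement is a \emph{conjecture} in the paper, not a theorem; the paper offers no proof of it. So there is no ``paper's own proof'' against which to compare your attempt.

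Your proposal is likewise not a proof. It is a research outline: you motivate why $(-1,1)$ is the natural target, sketch the intention to sharpen the construction behind Theorem~\ref{Tmain} by engineering a twist region near $\partial M$, and then explicitly name the obstacles you would need to overcome. But the central step---showing that one can choose a hierarchy (or modify the branched surface) so that the boundary train track carries \emph{all} slopes in $(-1,1)$---is precisely the content of the conjecture, and you do not supply it. Phrases such as ``one must either establish a universal per-stage estimate or, more plausibly, exhibit a preferred hierarchy'' and ``the branched surface framework must accommodate this limiting behavior'' are statements of what remains to be done, not arguments that it can be done.

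Concretely, the gap is this: in the paper's construction, the interval $(-a,b)$ is governed by the combinatorics of the train tracks $\tau,\tau'$ on $\partial M$, which in turn depend on the number of longitudinal circles in $\partial B_n'$ and on how the product-disk branches are attached. Each splitting step in Lemma~\ref{L2} and each component of $G$ in Section~\ref{step2} can multiply the number of longitudes without contributing proportionate vertical twisting, so the realized slope interval can be arbitrarily small as the hierarchy grows. Your proposal does not give a mechanism to bound this proliferation or to compensate for it with additional twisting, and the analogy with the fibered case in \cite{R1,R2} does not transfer directly because there the entire complement is a single product $S\times I$ and one controls the monodromy globally. Until that mechanism is supplied, what you have written is a restatement of the conjecture together with heuristics for why it should be true, not a proof.
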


The proof of the main theorem uses theorems in \cite{L, L1} on branched surfaces to generalize the approach
of \cite{R1} to nonfibred knots.  We first use Gabai's sutured manifold decomposition  \cite{G1,G2,G3} to construct a branched surface $B$.  Then, after first splitting $B$ as necessary,  we add in some product disks to get a new branched surface that carries more laminations which extend to taut foliations.  The key point in the construction is to add branch sectors so that the new branched surface does not contain any sink disk.  By \cite{L, L1}, this means that the branched surface carries a lamination.

\section{Laminar branched surfaces}



\begin{definition}
A \emph{branched surface} $B$ in $M$ is a union of finitely many compact smooth surfaces gluing together to form a compact subspace (of $M$)  locally modeled on Figure~\ref{Fbranch}(a) (ignore the arrows in the picture for now). 
\end{definition}

\begin{figure}
\begin{center}
\psfrag{(a)}{(a)}
\psfrag{(b)}{(b)}
\psfrag{horizontal}{$\partial_hN(B)$}
\psfrag{v}{$\partial_vN(B)$}
\includegraphics[width=4.0in]{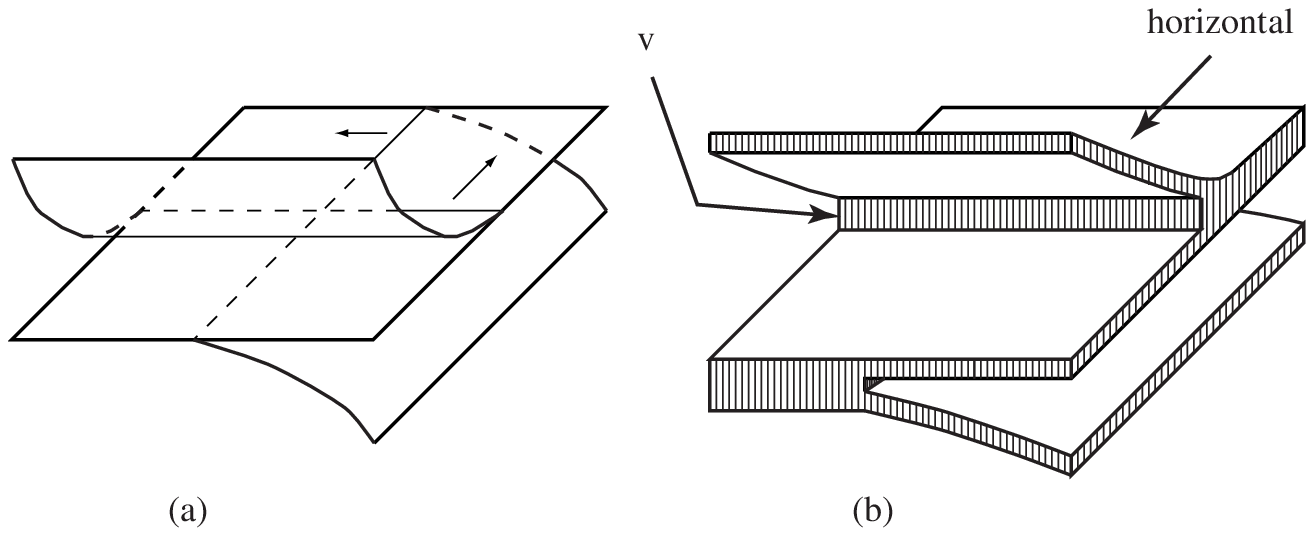}
\caption{}\label{Fbranch}
\end{center}
\end{figure}

Given a branched surface $B$ embedded in a 3-manifold $M$, we denote by $N(B)$ a regular neighborhood of $B$, as shown in Figure~\ref{Fbranch}(b).  One can regard $N(B)$ as an interval bundle over $B$.  We denote by $\pi : N(B)\to B$ the projection that collapses every interval fiber to a point.  As shown in Figure~\ref{Fbranch}(b), the boundary of $N(B)$ consists of two parts: the horizontal boundary $\partial_hN(B)$ which is transverse to the $I$-fibers of $N(B)$, and the vertical boundary $\partial_vN(B)$ which is the union of subarcs of the $I$-fibers.  The \emph{branch locus} of $B$ is $L=\{b\in B:$ $b$ does not have a neighborhood in $B$ homeomorphic to $\mathbb{R}^2  \}$.  We call the closure (under the path metric) of each component of $B\setminus L$ a \emph{branch sector} of $B$.  $L$ is a collection of smooth immersed curves in $B$.  Let $Z$ be the union of double points of $L$.  We associate with every component of $L\setminus Z$ a normal vector (in $B$) pointing in the direction of the cusp, as shown in Figure~\ref{Fbranch}(a).     We call it the \emph{branch direction} of this arc.  Let $D$ be a disk branch sector of $B$.  We call $D$ a \emph{sink disk} if the branch direction of every smooth arc in its boundary points into the disk and $D\cap\partial M=\emptyset$.  We call $D$ a \emph{half sink disk} if $\partial D\cap\partial M\ne\emptyset$ and the branch direction of each arc in $\partial D\setminus \partial M$ points into $D$.  Note that $\partial D\cap\partial M$ may not be connected.

Laminar branched surfaces were introduced in \cite{L} as a branched surface with the usual properties in \cite{GO} plus a condition that there is no sink disk.  The notion of laminar branched surface was slightly extended to branched surfaces with boundary, by adding a requirement that there is no half sink disk \cite{L1}.  Note that if a branched surface has no half sink disk, then one can arbitrarily split the branched surface near its boundary train track without creating any sink disk.  This plus the main theorem of \cite{L} implies the following theorem in \cite{L1}.  Note that the condition that there is no sink disk basically guarantees that the branched surface carries a lamination and the other conditions in \cite{GO} imply that the lamination is an essential lamination.

\begin{theorem}[Theorem 2.2 in \cite{L1}]\label{T1}
Let $M$ be an irreducible and orientable 3-manifold whose boundary is an incompressible torus.  Suppose $B$ is a laminar branched surface and $\partial M\setminus\partial B$ is a union of bigons.  Then, for any rational slope $s\in\mathbb{Q}\cup\infty$ that can be realized by the train track $\partial B$, if $B$ does not carry a torus that bounds a solid torus in $M(s)$, then $B$ fully carries a lamination $\mathcal{L}$ whose boundary consists of loops of slope $s$ and $\mathcal{L}$ can be extended to an essential lamination in $M(s)$.
\end{theorem}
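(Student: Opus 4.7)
The plan is to reduce the statement to the main theorem of \cite{L}, which asserts that a laminar branched surface in a closed irreducible 3-manifold fully carries an essential lamination. The first step is to split $B$ in a small collar neighborhood of $\partial M$ so that the resulting branched surface $B_s$ has boundary $\partial B_s$ a disjoint union of embedded circles of slope $s$ on $\partial M$. Since $s$ is carried by the train track $\partial B$, such a splitting exists, and because $B$ has no half sink disk, we may perform it without creating any new sink disk, so $B_s$ is still a laminar branched surface in $M$.

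The second step is to close up $B_s$ inside $M(s)$. After the splitting, the complementary regions of $\partial B_s$ on $\partial M$ are annuli with core slope $s$. Performing Dehn filling along slope $s$ glues in a solid torus $V$ whose meridian disks can be isotoped to have boundary lying in $\partial B_s$. Taking one meridian disk of $V$ to cap off each circle of $\partial B_s$ and adjoining these as new disk branch sectors, with branch directions along each seam inherited from those of $\partial B_s$, produces a branched surface $\widehat B$ embedded in the closed irreducible 3-manifold $M(s)$.

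The main technical point is to verify that $\widehat B$ is again laminar. The Gabai--Oertel conditions --- incompressibility of $\partial_hN(\widehat B)$ and the absence of monogons, disks of contact, and Reeb branched surfaces --- transfer from $B$ to $\widehat B$ without difficulty, with the single exception that a torus branch sector of $\widehat B$ could bound a solid torus in $M(s)$; this possibility is exactly what the hypothesis of the theorem excludes. For the sink-disk condition, the bigon hypothesis on $\partial M\setminus \partial B$ implies that each newly added meridian-disk sector inherits at least one outward-pointing cusp arc from the train track structure of $\partial B_s$, so no new sink disk is created in $\widehat B$, and away from the meridian disks the sink-disk condition holds because it held for $B$.

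By the main theorem of \cite{L}, the closed laminar branched surface $\widehat B$ then fully carries an essential lamination $\widehat{\mathcal L}$ in $M(s)$. Restricting gives a lamination $\mathcal L := \widehat{\mathcal L}\cap M$ fully carried by $B_s$ (and hence by $B$) whose boundary on $\partial M$ consists of circles of slope $s$, while $\widehat{\mathcal L}$ itself provides the claimed essential extension to $M(s)$. I expect the principal obstacle to lie in the sink-disk bookkeeping: one must carefully track branch directions through both the boundary splitting and the meridian-disk capping, using the bigon hypothesis together with the absence of half sink disks in $B$, to guarantee that no disk sector of $\widehat B$ becomes a sink. The no-solid-torus hypothesis enters only at the end, and is precisely what is needed to upgrade carrying a lamination to carrying an essential lamination in the closed manifold.
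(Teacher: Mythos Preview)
This theorem is not proved in the paper; it is quoted from \cite{L1}. The paper offers only a one-line indication of the argument immediately before the statement: ``if a branched surface has no half sink disk, then one can arbitrarily split the branched surface near its boundary train track without creating any sink disk. This plus the main theorem of \cite{L} implies the following theorem.'' Your proposal follows exactly this outline---split near $\partial M$ to realize slope $s$, cap off in $M(s)$, and invoke the main theorem of \cite{L}---so it is fully consistent with what the paper sketches.

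One point in your write-up is garbled. After you split $B$ so that $\partial B_s$ is a union of smooth simple closed curves of slope $s$, the boundary train track has no switches, so there are no ``seams with inherited branch directions'' along which the meridian disks are attached; the capping disks are glued smoothly and simply extend the adjacent branch sectors of $B_s$. The correct reason no sink disk appears is the one the paper hints at: since $B$ has no half sink disk, the boundary splitting can be done without creating any sink disk, and each sector of $B_s$ meeting $\partial M$ already has an outward-pointing cusp arc in its internal boundary; capping with a meridian disk does not remove that arc. Your appeal to the bigon hypothesis at this step is misplaced---the bigon condition on $\partial M\setminus\partial B$ is what guarantees that the complementary regions of $\widehat B$ near the filled solid torus remain innocuous (no monogons or compressing disks are introduced), not that the capped sectors acquire outward cusps.
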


\section{Sutured manifold decompositions}
In \cite{G1}, Gabai introduced the notions of sutured manifold and sutured manifold decomposition.  We will state  basic definitions and theorems as needed for this paper
but we refer the reader to \cite{G1,G2,G3} for a more detailed description.  The papers \cite{A,C2C,J}  and book \cite{C1C} also provide nice descriptions of some of Gabai's sutured manifold theory.  In this paper, we will use branched surfaces to describe sutured manifolds and sutured manifold decompositions.  

\begin{definition}[Definition 2.6, Gabai \cite{G1}]\label{DG26}
A {\it sutured manifold} $(M,\gamma)$ is a compact oriented 3-manifold $M$ together with a set $\gamma\subset \partial M$ of pairwise disjoint annuli $A(\gamma)$ and tori $T(\gamma)$. Furthermore, the interior of each component of $A(\gamma)$ contains a {\it suture}; i.e., a homologically nontrivial oriented simple closed curve. We denote the set of sutures by $s(\gamma)$.

Finally, every component of $R(\gamma)=\partial M\setminus \Int(\gamma)$ is oriented. Define $R_+(\gamma)$ (or $R_-(\gamma)$) to be those components of 
$\partial M\setminus \Int(\gamma)$ whose normal vectors point out of (into) $M$. The orientations on $R(\gamma)$ must be coherent with respect to $s(\gamma)$; i.e., if $\delta$ is a component of $\partial R(\gamma)$ and is given the boundary orientation, then $\delta$ 
must represent the same homology class in $H_1(\gamma)$ as some suture.
\end{definition}

Roughly speaking, a sutured manifold is a 3-manifold together with extra information about $\partial M$. 
Given a sufficiently nice surface embedded in a sutured manifold $(M,\gamma)$, it is important to be able to cut $M$ open along $S$ while keeping track of
corresponding  boundary information. This is captured in the following definition.

\begin{definition}[Definition 3.1, Gabai \cite{G1}] 
Let $(M,\gamma)$ be a sutured manifold and $S$ a properly embedded surface in $M$ such that for every component $\lambda$ of $S\cap \gamma$ one of (1)-(3) holds:
\begin{enumerate}
\item $\lambda$ is a properly embedded nonseparating arc in $\gamma$.
\item $\lambda$ is a simple closed curve in an  annular component $A$ of $\gamma$ in the same homology class as $A\cap s(\gamma)$.
\item $\lambda$ is a homotopically nontrivial curve in a toral component $T$ of $\gamma$, and if $\delta$ is another component of $T\cap S$, then $\lambda$ and $\delta$ represent the same homology class in $H_1(T)$.
\end{enumerate}

The surface $S$ defines a {\it sutured manifold decomposition} $$(M,\gamma)\overset{S}{\rightsquigarrow} (M',\gamma')$$ where
$$M'=M\setminus \Int(N(S))$$ and 
\begin{eqnarray*}
\gamma' & = & (\gamma\cap M')\cup N(S'_+\cap R_-(\gamma))\cup N(S'_-\cap R_+(\gamma)),\\
R'_+(\gamma') & = & ((R_+(\gamma)\cap M')\cup S'_+)\setminus \Int(\gamma'),  \mbox{and}\\
R'_-(\gamma') & = & ((R_-(\gamma)\cap M')\cup S'_-)\setminus \Int(\gamma')
\end{eqnarray*}
where $S'_+$ ($S'_-$) is that component of $\partial N(S)\cap M'$ whose normal vector points out of (into) $M'$.
\end{definition}

\begin{definition}[Definition 0.2, Gabai \cite{G2}] 
A sutured manifold decomposition
$$(M,\gamma)\overset{S}{\rightsquigarrow} (M',\gamma')$$
is called {\it well-groomed} if for each component $V$ of $R(\gamma)$, $S\cap V$ is a union of parallel, coherently oriented, nonseparating 
closed curves and arcs.
\end{definition}

\begin{definition}[Definition 3.2, Gabai \cite{G3}] \label{DG32}
Let $$(M,\partial M)\overset{S_1}{\rightsquigarrow} (M_1,\gamma_1)\overset{S_2}{\rightsquigarrow}\cdots\overset{S_n}{\rightsquigarrow}(M_n, \gamma_n)$$ 
be a sequence of sutured manifold decompositions where $\partial M$ is a nonempty union of tori. Define $E_0=\partial M$. Define $E_i$ to be the union of those components of $E_{i-1}\setminus \Int(N(S_i))$ which are annuli and tori (i.e., if $M_i$ is viewed as a submanifold of $M$, then $E_i$ consists of those components of $\gamma_i$ which are contained in $\partial M$). The components of $E_i$ are called the {\it boundary sutures} of $\gamma_i$.
\end{definition}

We also introduce the following definition.

\begin{definition}
Let $(M,\gamma)$ and $(N,\tau)$ be sutured manifolds. 
We will call $(M,\gamma)$ a {\it sutured submanifold} of $(N,\tau)$, and write $(M,\gamma)\subset (N,\tau)$,  if
$M$ is  a union  of components of $N$ and $\gamma=\tau\cap M$.

If $(M,\gamma)\subset (N,\tau)$,  then $(N,\tau)\setminus (M,\gamma)$ denotes the sutured manifold $(N\setminus M, \tau\setminus\gamma)$.
\end{definition}

\begin{theorem}[Gabai, Lemmas 3.6 and 5.1 in \cite{G3}] \label{Gabai}
Let $k$ be a knot in $S^3$. There is a well-groomed sutured manifold sequence
$$(M,\gamma)\overset{S_1}{\rightsquigarrow} (M_1,\gamma_1)\overset{S_2}{\rightsquigarrow}\cdots\overset{S_n}{\rightsquigarrow}(M_n, \gamma_n)=(S\times I,\partial S\times I)$$ of $$(M,\gamma) = (S^3\setminus \Int(N(k)),\partial N(k))$$ such that $\partial S_i\cap\partial N(k)$ is a (possibly empty)  union of circles for each $i,1\le i\le n$, $S_1$ is a minimal genus Seifert surface, and $S$ is a compact (not necessarily connected) oriented surface.
\end{theorem}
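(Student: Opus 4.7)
The plan is to assemble this statement from Gabai's sutured manifold theory in three layers: set up the base case with a minimal genus Seifert surface, extend to a well-groomed hierarchy terminating at a product, and then refine so that every decomposing surface meets the original boundary torus only in circles.

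For the setup, view $(M,\gamma)=(S^3\setminus\Int(N(k)),\partial N(k))$ as a sutured manifold whose only suture is the toral component $T=\partial N(k)$. Since $k$ is a nontrivial knot in $S^3$, the complement $M$ is irreducible and $\partial M$ is incompressible, so $(M,\gamma)$ is taut. Let $S_1$ be a minimal genus Seifert surface for $k$; then $S_1$ meets $T$ in a single longitude and is a taut decomposing surface, so the resulting sutured manifold $(M_1,\gamma_1)$ is again taut, and the torus $T$ is cut into an annular component of $\gamma_1$ whose suture is the longitude $\partial S_1$. Now apply Gabai's existence theorem for well-groomed sutured manifold hierarchies of taut sutured manifolds (Theorem 4.2 of \cite{G2}) to $(M_1,\gamma_1)$. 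This yields a finite sequence of well-groomed decompositions terminating in a product sutured manifold, which must have the form $(S\times I,\partial S\times I)$ for some compact oriented (possibly disconnected) surface $S$.

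The remaining point is the content of Lemmas~3.6 and~5.1 of \cite{G3}: at each stage the decomposing surface $S_i$ can be chosen so that $\partial S_i\cap\partial N(k)$ is a union of circles. The key observation is that after the first cut, every boundary suture of $\gamma_i$ lying in $\partial N(k)$ is parallel to the longitude $\partial S_1$, and all such sutures are coherently oriented. Any arc of $S_i\cap\partial N(k)$ must therefore join two of these parallel longitudes (or one to itself), and such an arc can be removed by tubing along an annular band of $\partial N(k)$ running between the endpoints, or equivalently by switching to another homological representative of the same decomposition class, without destroying tautness or well-groomedness.

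The main obstacle is performing this refinement compatibly across the entire hierarchy, so that the terminal sutured manifold is still a product. This succeeds because the longitudinal structure of the sutures on $\partial N(k)$ is preserved from one stage to the next: each local modification affects only the sutures $\gamma_i\setminus\partial N(k)$ lying in the interior of $M$, and leaves intact the homological data that Gabai's existence theorem uses to drive the hierarchy to a product termination. The inductive bookkeeping required to verify this compatibility is where the bulk of the work in \cite{G3} lies.
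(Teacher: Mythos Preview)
The paper does not give a proof of this theorem; it is stated with attribution to Gabai (Lemmas~3.6 and~5.1 of \cite{G3}) and used as a black box. So there is no ``paper's own proof'' to compare against.

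Your sketch is a reasonable outline of the strategy behind Gabai's result, and the first two layers are accurate: the initial decomposition along a minimal genus Seifert surface, followed by an appeal to the existence of well-groomed taut hierarchies. The third layer, however, is where your argument becomes imprecise. You propose to remove arcs of $\partial S_i\cap\partial N(k)$ after the fact by ``tubing along an annular band of $\partial N(k)$.'' This is not how Gabai actually controls the boundary behavior, and as stated it is problematic: tubing a decomposing surface along $\partial M$ will in general change its Thurston norm and can destroy tautness of the resulting sutured manifold. Gabai's Lemma~3.6 instead builds the circle condition into the \emph{choice} of decomposing surface at each stage, using the well-groomed condition together with the fact that the boundary sutures on $T=\partial N(k)$ are all parallel longitudinal annuli; one then checks that a norm-minimizing representative of the appropriate homology class can be taken to meet each boundary suture either in essential circles or not at all. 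The inductive compatibility you mention in your last paragraph is exactly right in spirit, but the mechanism is selection rather than surgery.

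A minor point: the theorem as stated allows $k$ to be any knot, whereas you invoke nontriviality to get incompressibility of $\partial M$. For the unknot the statement is trivially true (take $S_1$ a disk), so this does no harm, but it is worth noting.
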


\subsection{Sutured manifold decompositions determine branched surfaces}

As described
by Gabai as Construction 4.6  in \cite{G3} (and detailed further in \cite{C2C}), a sutured manifold decomposition sequence corresponds to building a (finite depth) branched surface, starting with $S_1$ and successively adding the $S_i$'s.  To see this,  inductively construct a sequence of transversely oriented branched surfaces.  Let $B_1=S_1$. So we may view $M_1=M\setminus \Int(N(B_1))$, where $N(B_1)$ is a fibered neighborhood of $B_1$.  As a sutured manifold $(M_1,\gamma_1)$, its suture  $\gamma_1$ is the annulus $\overline{\partial M\setminus N(B_1)}$ and the two components of $\partial_hN(B_1)$ are the plus and minus boundaries $R_+(\gamma_1)$ and $R_-(\gamma_1)$ of the sutured manifold.  We may view $R_+(\gamma_1)$ and $R_-(\gamma_1)$ as lying on the plus and minus sides of $S_1$ respectively and we assign a normal direction for $B_1=S_1$ pointing from the plus side to the minus side.  

Suppose we have constructed a branched surface $B_k$ using the surfaces $S_1,\dots S_k$ in the sutured manifold decomposition, such that $M\setminus \Int(N(B_k))=M_k$ and the suture $\gamma_k$ of $(M_k,\gamma_k)$ consists of $\partial_vN(B_k)$ and a collection of annuli in the boundary torus $\partial M$.  Now we consider the sutured manifold decomposition $(M_k,\gamma_k)\overset{S_{k+1}}{\rightsquigarrow} (M_{k+1},\gamma_{k+1})$.  The surface $S_{k+1}$ has a normal vector.  Then we can deform $B_k\cup S_{k+1}$ into a branched surface $B_{k+1}$ as follows:
\begin{enumerate}
  \item  for each component of $\partial S_{k+1}$ that is not totally inside $\partial_vN(B_k)$, we can deform $B_k\cup S_{k+1}$ near $\partial S_{k+1}$ as in Figure~\ref{Fsuture}(a), so that the normal directions of $B_k$ and $S_{k+1}$ are compatible in the newly constructed branched surface. 
  \item for each component $c$ of $\partial S_{k+1}$ lying inside a suture $\partial_vN(B_k)$, we first slightly isotope $S_{k+1}$ by pushing $c$ into $R_\pm(\gamma_k)\subset\partial_hN(B_k)$, then as shown in Figure~\ref{Fsuture}(b), we can deform $B_k\cup S_{k+1}$ near $c$ into a branched surface.  By the requirement of the normal directions in the sutured manifold decomposition, the normal directions of $B_k$ and $S_{k+1}$ are compatible in the newly constructed branched surface. 
\end{enumerate}  
 It follows from the definition of sutured manifold decomposition \cite{G1} that $M\setminus \Int(N(B_{k+1}))=M_{k+1}$ and the suture $\gamma_{k+1}$ of $(M_{k+1},\gamma_{k+1})$ consists of $\partial_vN(B_{k+1})$ and a collection of annuli in the boundary torus $\partial M$. 
We will sometimes use the notation $$B_{k+1} = B_{(M_{k+1},\gamma_{k+1})} = B_{\langle S_1; S_2; ... ; S_{k+1}\rangle}$$ 

In summary,  there is a map \\

\begin{eqnarray*}
\{\mbox{sutured manifold decomposition sequences}\} & \to & \{\mbox{properly embedded branched surfaces}\}\\
(S_1,S_2,...,S_l) & \mapsto &  B_{\langle S_1; S_2; ... ; S_{l}\rangle}
\end{eqnarray*}
and a (forgetful) map
\begin{align*}\{\mbox{properly embedded branched surfaces}\}\to\{\mbox{sutured 3-manifolds}\}\\
B\mapsto (M_B,\gamma_B)=(M\setminus \Int(N(B)),\partial_vN(B)\cup E')
\end{align*}
where $E'\subset\partial M$ satisfies $E'=E$, the set of boundary sutures,  if $B$ intersects $\partial M$ only in longitudes. For future reference, it is useful to highlight that under this
correspondence, $\partial_hN(B)$ corresponds naturally to $R_+(\gamma_B)\cup R_-(\gamma_B)$.

\begin{figure}
\begin{center}
\psfrag{a}{(a)}
\psfrag{b}{(b)}
\psfrag{B}{$B_k$}
\psfrag{S}{$S_{k+1}$}
\includegraphics[width=4.5in]{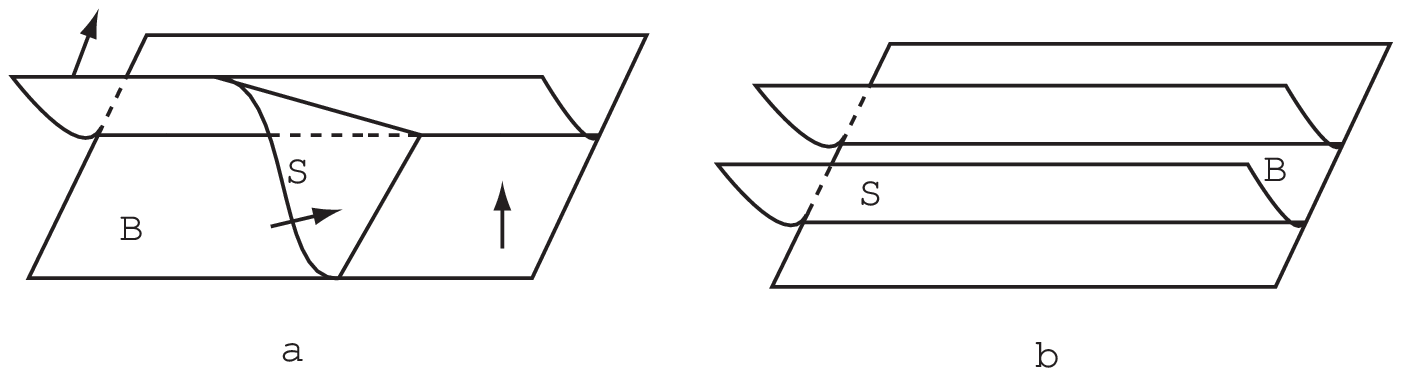}
\caption{}\label{Fsuture}
\end{center}
\end{figure}

\section{The construction}

\subsection{Modifying the sutured manifold hierarchy} Given a well-groomed sutured manifold hierarchy satisfying the conclusions of Theorem~\ref{Gabai},
we can inductively construct the sequence of branched surfaces $B_1,\dots, B_n$ corresponding to the sutured manifold decomposition.  The branched surface $B_n$ in the end has the properties that (1) $M\setminus \Int(N(B_n))$ is a product and (2) $\partial B_n$ is a collection of circles in $\partial M$ of slope $0$.  In particular, any taut foliation carried by $B_n$ will also necessarily meet $\partial M$ only in
simple closed curves of slope $0$.

To obtain a branched surface carrying taut  foliations realizing an open interval of boundary slopes about $0$, it is necessary to modify the sutured manifold hierarchy, or, equivalently, the sequence of branched surfaces $B_k$. In this section, we describe one way of doing this. We break the process into two steps.

As a first step, we slightly modify the sutured manifold hierarchy by adding some parallel copies of the surfaces $S_k$. Equivalently, we modify the sequence of branched surfaces $B_k$ by adding some parallel copies of the surfaces $S_k$.  This operation is equivalent to a splitting of the branched surface. 
As a second (and final) step, we further modify the sutured manifold hierarchy by adding carefully chosen product disks.

Before giving a precise description of these steps, we introduce some terminology.
Let $B$ be a transversely oriented branched surface and let $F$ be a component of $\partial_hN(B)$.  The boundary of $F$ has two parts: $\partial F\cap\partial M$ and $\partial F\cap\partial_vN(B)$.  We call $\partial F\cap\partial_vN(B)$ the \emph{internal boundary} of $F$.  Let $L$ be the branch locus of $B$.  Let $L_F$ be the closure of $\pi^{-1}(L)\cap\Int(F)$, where $\pi\colon N(B)\to B$ is the map collapsing each interval fiber to a point.  So $L_F$ is a trivalent graph properly embedded in $F$. We call $L_F$ the \emph{projection} of the branch locus to $F$.  Each arc in $L_F$ has a normal direction induced from the branch direction of $L$.  

\begin{definition}\label{Dgood}
Let $F$ be a component of $\partial_hN(B)$ with $\partial F\cap\partial M\ne\emptyset$ and let $\eta$ be an arc properly embedded in $F$.  If $F$ has nonempty internal boundary, we require that $\eta$ connects $\partial F\cap\partial M$ to the internal boundary of $F$.  Choose $\eta$ so that it intersects $L_F$ transversely  and only at points in the interior of edges of $L_F$ (namely, it misses all triple points). Since $\eta$ is transverse to $L_F$, the induced branch direction of $L_F$ gives a direction along $\eta$ for each point in $\eta\cap L_F$.  
We say $\eta$ is \emph{good} if these induced directions are coherent along $\eta$ and
 all point away from an endpoint of $\eta$ that lies in $\partial M$.  

We say $F$ is \emph{good} if $F$  satisfies the following properties: 
\begin{enumerate}
  \item  the closure of each component $D$ of $F\setminus L_F$ has a boundary arc with induced branch direction (from $L_F)$ pointing out of $D$,
  \item  if $F$ has internal boundary, then there is a set of disjoint good arcs, denoted by $\Gamma_F$, connecting each component of $\partial F\cap\partial M$ to the internal boundary of $F$, 
  \item if $F$ has no internal boundary (in which case, $F$ must be a Seifert surface of the knot exterior), then there is a properly embedded nonseparating good arc in $F$, which we also denote by $\Gamma_F$.  
\end{enumerate}  
\end{definition}

\begin{lemma}\label{L1}
Let $B$ be a branched surface.  If each component of $\partial_hN(B)$ is good, then $B$ does not contain any sink disk or half sink disk.
\end{lemma}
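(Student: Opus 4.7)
The plan is to prove the contrapositive: if $B$ contains a sink disk or a half sink disk $D$, I will exhibit a component $F$ of $\partial_hN(B)$ that violates condition~(1) of Definition~\ref{Dgood}. Since $D$ is a disk branch sector of $B$, the preimage $\pi^{-1}(D)\cap \partial_hN(B)$ consists of two disks $D^+$ and $D^-$, each mapping homeomorphically onto $D$ under $\pi$. I work with $D^+$ and take $F$ to be the component of $\partial_hN(B)$ containing it.

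Next I show that $D^+$ is a full component of $F\setminus L_F$. The boundary $\partial D^+$ in $F$ has one arc of $L_F$ for every branch arc in $\partial D$, together with one arc of $\partial F\cap \partial M$ for every arc in $\partial D\cap \partial M$ (this second type is absent for a true sink disk and present for a half sink disk). The construction of $B$ from the sutured manifold hierarchy in Section~3 ensures $\partial B\subset \partial M$, so $\partial D\subset L\cup \partial M$ and $\partial D^+$ contains no arc in $\partial F\cap \partial_vN(B)$. The interior of $D^+$ projects to the interior of the branch sector $D$, which lies in $B\setminus L$, and is therefore disjoint from $L_F$. Hence $D^+$ is a component of $F\setminus L_F$ with $\partial D^+\subset L_F\cup(\partial F\cap \partial M)$.

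Finally I compare branch directions. At each branch arc $\ell\subset\partial D$, the arrow of $L$ points into $D$ by the sink or half sink disk hypothesis, which forces $D$ to occupy the merged-sheet position at $\ell$. Inspecting the local branched-surface model of Figure~\ref{Fbranch}(a) on the side of $D$ containing $D^+$, the arc of $L_F$ on $\partial D^+$ corresponding to $\ell$ inherits its induced direction from $\ell$ and likewise points into $D^+$. Hence every $L_F$-arc on $\partial D^+$ has induced branch direction pointing into $D^+$, so $D^+$ has no boundary arc whose induced branch direction points out of $D^+$. This is precisely a failure of condition~(1) of Definition~\ref{Dgood} for $F$, completing the contrapositive. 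The main technical point is the direction-preserving correspondence between branch arcs of $\partial D$ in $B$ and $L_F$-arcs of $\partial D^+$ in $F$; once this is established from the local model at a single branch arc, both the sink disk and half sink disk cases are handled uniformly.
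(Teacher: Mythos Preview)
Your overall approach — lifting a putative sink disk $D$ to a piece $D^+$ of $\partial_h N(B)$ and showing condition~(1) of Definition~\ref{Dgood} fails there — is precisely the contrapositive of the paper's (very terse) direct argument. But two of your intermediate assertions about the local model are incorrect.

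First, the sentence ``the arrow of $L$ points into $D$ \dots, which forces $D$ to occupy the merged-sheet position at $\ell$'' has the geometry backwards. By definition the branch direction points toward the cusp; if it points into $D$, the cusp lies on the $D$ side, so $D$ is one of the \emph{two} sheets on the cusp side, not the single merged sheet on the other side.

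Second, the claim that ``$\partial D^+$ contains no arc in $\partial F\cap\partial_v N(B)$'' conflates two unrelated objects: $\partial B$ (the boundary of the branched surface in $M$, which does lie in $\partial M$) and $\partial_v N(B)$ (the vertical boundary of the fibred neighbourhood, which lives over the branch locus and is present whether or not $B$ has boundary). Because $D$ sits on the cusp side at every branch arc of $\partial D$, along any such arc where the transverse orientation of $D$ happens to point toward the cusp the lift $D^+$ meets $\partial_v N(B)$ there rather than $L_F$.

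Neither error is fatal to the strategy: $D^+$ is still the closure of a component of $F\setminus L_F$ (its boundary lies in $L_F\cup\partial F$, with $\partial_v N(B)$-arcs allowed), and at those boundary arcs that \emph{do} lie in $L_F$ the induced direction genuinely points into $D^+$. Since condition~(1) demands an $L_F$-arc pointing outward, it still fails. But your middle paragraph should be rewritten to reflect the correct local picture rather than the inverted one you describe.
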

\begin{proof}
Let $F$ be a component of $\partial_hN(B)$ and let $L_F$ be as above.  Let $P$ be the closure (under path metric) of a component of $F\setminus L_F$.  So $P$ can be viewed as a copy of a branch sector of $B$.  It follows from part (1) of Definition~\ref{Dgood} that  $B$ has no sink disk or half sink disk.
\end{proof}

\begin{definition}
We say the branched surface $B$ is \emph{good}  if $B$ satisfies the following properties:
\begin{enumerate}
\item every component of $\partial_hN(B)$ is good, and 
\item the arc systems $\Gamma_F$ as described in (2) and (3) above can be chosen  so that the projections $\pi(\Gamma_F)$, as $F$ ranges over all components of  $\partial_hN(B)$, are disjoint in $B$.
\end{enumerate}
\end{definition}
\noindent Note that these good arcs $\Gamma_F$ will be the arcs along which we will attach product disks.

\subsection{Step 1: Splitting $B_n$}

Next we will  describe the first modification of  a sutured manifold decomposition sequence  satisfying the conclusions of Theorem~\ref{Gabai}.

\begin{lemma}\label{L2}
Let $k$ be a nontrivial knot in $S^3$ and $M=S^3\setminus  \Int(N(k))$ the knot exterior.  
Let 
$$(M,\partial M)\overset{S_1}{\rightsquigarrow} (M_1,\gamma_1)\overset{S_2}{\rightsquigarrow}\cdots\overset{S_n}{\rightsquigarrow}(M_n, \gamma_n)=(S\times I,\partial S\times I)$$ be a well-groomed sutured manifold hierarchy satisfying the conclusions of Theorem~\ref{Gabai}.
Then there exists a well-groomed sutured manifold hierarchy 
$$
(M,\gamma)\overset{S_1}{\rightsquigarrow} (M^{\prime}_1,\gamma^{\prime}_1)\overset{R_1^{\prime}}{\rightsquigarrow} (M^{\prime\prime}_1,\gamma^{\prime\prime}_1)\overset{S_2}{\rightsquigarrow}(M^{\prime}_2,\gamma^{\prime}_2)\overset{R_2^{\prime}}{\rightsquigarrow} (M^{\prime\prime}_2,\gamma^{\prime\prime}_2)\overset{S_3}{\rightsquigarrow}\cdots\overset{S_n}{\rightsquigarrow}(M^{\prime}_n, \gamma^{\prime}_n)
$$
 which also satisfies the conclusions of Theorem~\ref{Gabai}.
Moreover, the branched surfaces $B_l^{\prime}=B_{(M^{\prime}_l,\gamma_l^{\prime})}, 1\le l\le n$, satisfy
the conditions: 
\begin{enumerate}
  \item $\partial B_l'\cap\partial M$ is a collection of simple closed curves of slope 0 in $\partial M$ for each $l$.
  \item $(M_l,\gamma_l)$ is a sutured submanifold of $(M^{\prime}_l, \gamma^{\prime}_l)$ and $(M^{\prime}_l, \gamma^{\prime}_l)\setminus (M_l,\gamma_l)$ is  a product sutured manifold for each $l$.
  \item every branched surface $B_l'$ is good.
  \item no $B_l'$ carries a torus.
\item $(M^{\prime}_n, \gamma^{\prime}_n)$ is a product sutured manifold $(S^{\prime}\times I, \partial S^{\prime}\times I)$.
\end{enumerate}  
\end{lemma}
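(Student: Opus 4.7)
The plan is to proceed by induction on $l$, beginning from Gabai's hierarchy and at each stage inserting a decomposition along a carefully chosen surface $R_l'$ before applying $S_{l+1}$. The surface $R_l'$ will be a properly embedded surface consisting of parallel copies of components of $R_\pm(\gamma_l')$. At the level of branched surfaces, this $R_l'$-decomposition corresponds to splitting $B_l'$ along a parallel copy of a piece of $\partial_hN(B_l')$: it duplicates local sheets and peels off a product sutured piece $R_l'\times I$ from $M_l'$. Thus the underlying branched surface $B_l'$ carries the same laminations as $B_l$ (up to parallel copies), while its complement is enlarged only by product pieces.

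Properties (1), (2), and (5) should follow routinely from the construction. For (1), one chooses each $R_l'$ either to miss $\partial M$ or to meet it in slope-0 curves parallel to $\partial S_l\cap \partial M$, which is permissible by Theorem~\ref{Gabai}. Property (2) is built into the setup: decomposing along $R_l'$ by definition strips off a product sutured piece, so $(M_l',\gamma_l')\setminus (M_l,\gamma_l)$ is a product. Property (5) follows by composition, since $(M_n,\gamma_n)$ is already a product by Theorem~\ref{Gabai} and adjoining another product piece yields a product. Well-groomedness of the modified sequence is preserved because parallel copies of well-groomed surfaces in $R(\gamma)$ still meet each component of $R(\gamma)$ in parallel, coherently oriented, nonseparating curves and arcs.

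For property (4), observe that $B_l'$ is related to $B_l$ only by the addition of product sheets, so any closed surface carried by $B_l'$ can be isotoped into one carried by $B_l$. Since $(M_n,\gamma_n)=(S\times I,\partial S\times I)$, any closed surface carried by $B_n$ is assembled from copies of components of $S$ glued along sutures; as $S_1$ is a minimal-genus Seifert surface and subsequent $S_i$'s arise in a well-groomed hierarchy, no such assembly produces a torus that bounds a solid torus in $M(s)$ for the slopes under consideration. The product extensions giving $B_l'$ from $B_l$ add no new carried closed surfaces.

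The main obstacle is property (3): arranging that every $B_l'$ is good. The inductive step runs as follows. Given that $B_{l-1}'$ is good with a disjoint system of good arcs $\{\Gamma_F\}$ in its horizontal boundary, when we attach $S_l$ the horizontal boundary components of the resulting branched surface decompose into pieces coming from the old $\partial_hN(B_{l-1}')$ and pieces coming from the two sides of $S_l$. The $R_l'$-splitting is chosen to (a) duplicate sheets adjacent to $S_l$ so that each complementary region of the projected branch locus $L_F$ inherits an outward-pointing edge from the branch direction of $S_l$, verifying condition (1) of Definition~\ref{Dgood}, and (b) provide product corridors along which good arcs in the new horizontal components can be drawn from $\partial M$ to the internal boundary. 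Disjointness of the projections $\pi(\Gamma_F)$ in $B_l'$ is achieved by placing the good arcs of distinct components on distinct parallel sheets of $R_l'$, which is possible because we have the freedom to include enough parallel copies. The well-groomed hypothesis is used throughout, since the coherence of orientations along $S_l\cap R_\pm(\gamma_{l-1}')$ is exactly what guarantees that branch directions induced by $S_l$ align consistently with those already present in $B_{l-1}'$.
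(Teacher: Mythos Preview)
Your overall strategy matches the paper's: induct on $l$, and at each stage insert an $R_l'$-decomposition along parallel copies of $R_\pm(\gamma_l)$ before applying $S_{l+1}$. Properties (1), (2), (5) are indeed routine. However, your treatment of (3) and (4) has genuine gaps.

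For property (3), your sketch does not supply the key geometric ingredient. The paper takes $R_l'$ to be exactly one parallel copy of $R_+(\gamma_l)\cup R_-(\gamma_l)$, not ``enough'' copies, and achieves disjointness of the projected good arcs by a concrete construction rather than by spreading arcs across distinct sheets. After the $R_l'$-decomposition the components of $\partial_hN(B_l'')$ fall into three types; the new content comes from the type-(3) components $F$ (with $L_F=\emptyset$) bounding the sutured submanifold $(M_l,\gamma_l)$, and the type-(2) components $F'$ on the other side. On such an $F$ one chooses a simple closed curve $\eta$ dual to $\partial S_{l+1}$---the well-groomed hypothesis is exactly what makes all intersections of $\eta$ with $\partial S_{l+1}$ have coherent branch direction. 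Good arcs for the new horizontal component $H$ (containing one side of $S_{l+1}$) are then built from arcs in $S_{l+1}$ together with subarcs of $\eta$, and good arcs for $F'$ are chosen disjoint from $\eta$ and from the arcs already used. Without this explicit use of the dual curve $\eta$, there is no mechanism guaranteeing that the induced branch directions along your candidate arcs all point away from $\partial M$, which is the entire content of ``good''. Your clause (a) about each region of $F\setminus L_F$ having an outward-pointing edge is correct in spirit but also needs the three-type classification to verify.

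For property (4), your argument drifts off target: the claim is that $B_l'$ carries no torus at all, not merely no torus bounding a solid torus in $M(s)$, and you need this for every $l$, not just $l=n$. The paper proves this inductively: if $B_{l+1}'$ carried a torus $T$, then since $B_l''$ (a splitting of $B_l'$) carries no torus, $T$ must pass through $S_{l+1}$. The well-groomed condition forces $\partial S_{l+1}\cap R_\pm(\gamma_l)$ to be homologically nontrivial in $H_1(R_\pm(\gamma_l),\partial R_\pm(\gamma_l))$, so $T\cap F$ is homologically nontrivial in some component $F$ of $\partial_hN(B_l'')$. But $T$ is a closed torus in $S^3$, hence null-homologous, a contradiction. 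Your assertion that ``no such assembly produces a torus'' is not an argument; the homological obstruction coming from well-groomedness is what actually rules it out.
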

\begin{proof}
First note that, in the sutured manifold hierarchy above, each $R_i'$ is a parallel copy of some components of $R_+(\gamma_i')\cup R_-(\gamma_i')$.

We proceed by induction on $l$.
Since $k$ is nontrivial and hence $S_1$ has genus at least one, the branched surface $B_1'=S_1$ is easily seen to satisfy conditions (1)-(4). So suppose we have constructed 
$$(M,\gamma)\overset{S_1}{\rightsquigarrow} (M^{\prime}_1,\gamma^{\prime}_1)\overset{R_1^{\prime}}{\rightsquigarrow} (M^{\prime\prime}_1,\gamma^{\prime\prime}_1)\overset{S_2}{\rightsquigarrow}(M^{\prime}_2,\gamma^{\prime}_2)\overset{R_2^{\prime}}{\rightsquigarrow} (M^{\prime\prime}_2,\gamma^{\prime\prime}_2)\overset{S_3}{\rightsquigarrow}\cdots\overset{S_l}{\rightsquigarrow}(M^{\prime}_l, \gamma^{\prime}_l)$$ satisfying the conclusions of Theorem~\ref{Gabai}
and  such that the corresponding branched surfaces  $B_i'=B_{(M_i',\gamma_i')}$
satisfy the  conditions (1)-(4) for all $i$, 
$1\le i\le l$.

By condition (2), we may consider $(M_l,\gamma_l)$ to be a sutured submanifold of $(M'_l,\gamma'_l)$.
Let $R'_+(\gamma_l)$ and $R'_-(\gamma_l)$ be parallel copies of $R_+(\gamma_l)$ and $R_-(\gamma_l)$, chosen to be properly embedded in $(M_l,\gamma_l)\subset (M'_l,\gamma'_l)$ and
with boundary lying in $E_l\cup A(\gamma_l)$ (see Definition~\ref{DG26} and Definition~\ref{DG32}). Set $R'_l=R'_+(\gamma_l)\cup R'_-(\gamma_l)$.  We first consider the sutured manifold decomposition $(M^{\prime}_l,\gamma^{\prime}_l)\overset{R_l^{\prime}}{\rightsquigarrow} (M^{\prime\prime}_l,\gamma^{\prime\prime}_l)$.  By the definition of $R_l'$, this decomposition only adds some product complementary regions.  Set $B_l^{\prime\prime}=B_{(M_l^{\prime\prime},\gamma_l^{\prime\prime})}$.  The change from $B_l'$ to $B_l''$ is basically adding branch sectors corresponding to $R_l'$ and this operation creates some product complementary regions, see Figure~\ref{Ftype}(a) for a schematic picture.  We may view $(M_l,\gamma_l)\subset (M^{\prime\prime}_l,\gamma^{\prime\prime}_l)$, and consider the sutured manifold decompositions
$$ (M^{\prime}_l,\gamma^{\prime}_l)\overset{R_l^{\prime}}{\rightsquigarrow} (M^{\prime\prime}_l,\gamma^{\prime\prime}_l)\overset{S_{l+1}}{\rightsquigarrow}(M^{\prime}_{l+1},\gamma'_{l+1})\,\, ,$$
where we now  view $S_{l+1}$ as lying in $(M_l,\gamma_l)\subset (M^{\prime\prime}_l,\gamma^{\prime\prime}_l)$.  Certainly $B'_{l+1}$ satisfies conditions (1) and (2). 

Consider condition (3). We begin by considering a component $F$ of $\partial_h N(B_l^{\prime\prime})$.
The surface $F$ can be classified as one of the following 3 types (see Figure~\ref{Ftype}(b)):
\begin{enumerate}

  \item  $F$ can be viewed as a component $G$ of $\partial_hN(B_l')$, as illustrated in  Figure~\ref{Ftype}(b).  Since the new branch sectors are attached to $B_l'$ along cusp circles, $L_F$ is obtained from $L_G$ by adding curves parallel to curves in $L_G$ with coherent induced branch direction, where $L_G$ is the projection of the branch locus of $B_l'$ to $G$.  Since the branch directions are coherent, adding such parallel curves to $L_G$ does not affect the good arcs in $G$.  Thus in this case $F$ is good with respect to $B_l''$ with the same set of good arcs as $G$. \label{type1}

  \item $F$ is a horizontal boundary component for a newly created product complementary region and $\pi(F)$ contains part of the branch sectors added to $B_l'$, as illustrated in Figure~\ref{Ftype}(b).  In this case, each component of $L_F$ consists of a circle $C$ parallel to the internal boundary and with induced branch direction pointing to the internal boundary and possibly a collection of essential arcs in the annulus between $C$ and the internal boundary.   \label{type2}

  \item $F$ is in the boundary of the sutured submanifold $(M_l,\gamma_l)\subset (M^{\prime\prime}_l,\gamma^{\prime\prime}_l)$. In this case, $L_F=\emptyset$. \label{type3}
\end{enumerate}

\begin{figure}
\begin{center}
\psfrag{a}{(a)}
\psfrag{b}{(b)}
\psfrag{B}{$B_l'$}
\psfrag{C}{$B_l''$}
\psfrag{M}{$(M_l, \gamma_l)$}
\psfrag{N}{$N(B_l'')$}
\psfrag{p}{\shortstack{product\\ region}}
\psfrag{s}{\shortstack{add branch\\ sectors}}
\psfrag{1}{type (\ref{type1})}
\psfrag{2}{type (\ref{type2})}
\psfrag{3}{type (\ref{type3})}
\includegraphics[width=4.5in]{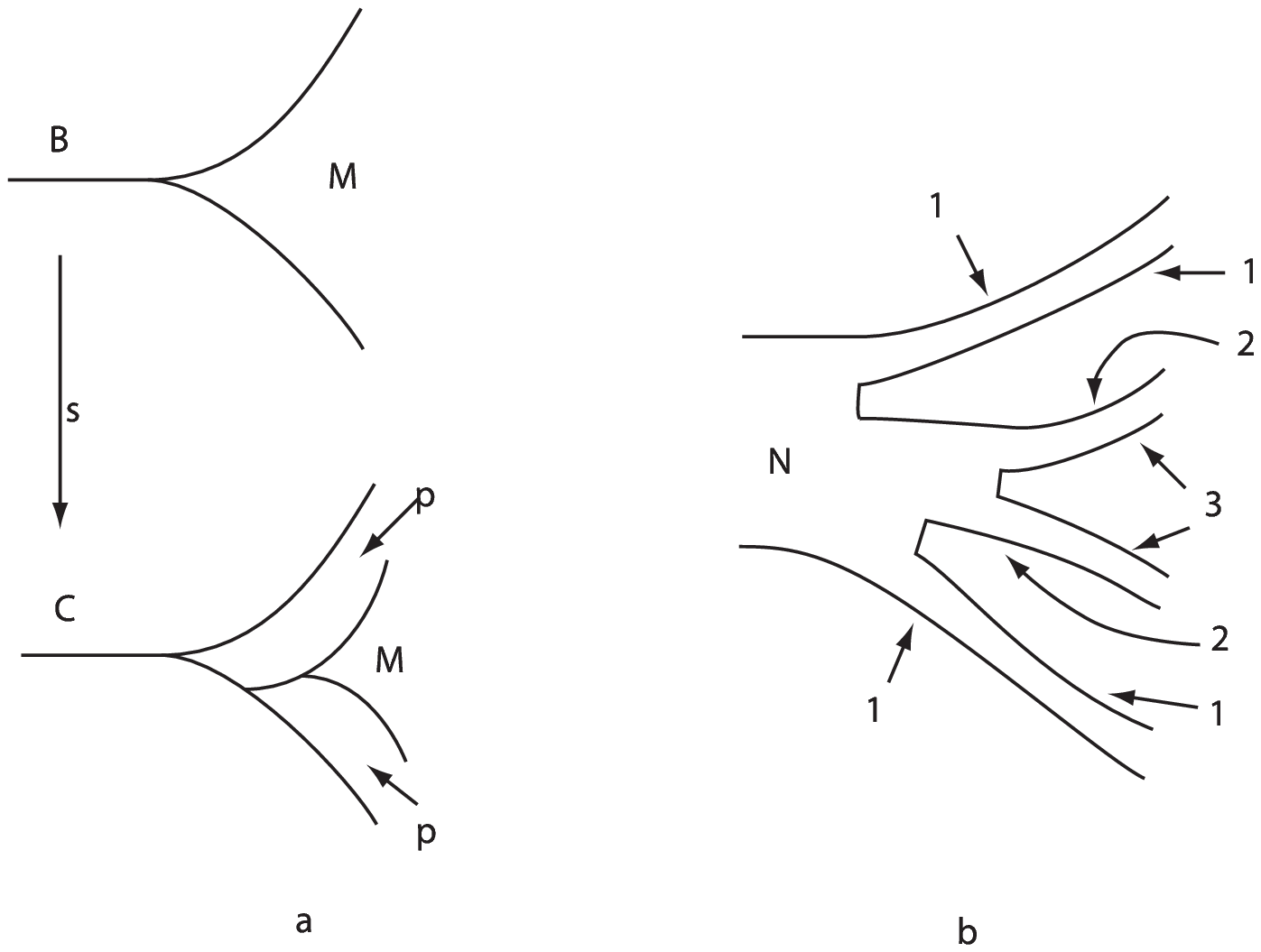}
\caption{}\label{Ftype}
\end{center}
\end{figure}

Next  consider how $\partial_hN(B_{l+1}')$ is related to  $\partial_hN(B_l'')$.
 Let $H$ be a component of $\partial_hN(B_{l+1}')$. Then either $H$ can be viewed as a component of 
$\partial_hN(B_l'')$  or $H$ contains a subset of one side of $S_{l+1}$.  Our goal is to find a set of good arcs for each component $H$ of $\partial_hN(B_{l+1}')$, so that the projections of the good arcs in $B_{l+1}'$ are disjoint.

\vspace{10pt}
\noindent
\emph{Case (a)}. $H$ is not a component of $\partial_hN(B_l'')$
\vspace{10pt}

In this case, $H$ is contained in the union of one side of $S_{l+1}$ and $F\setminus \partial S_{l+1}$, where $F$ is a component of $\partial_hN(B_l'')$ of type (3).  By our construction, $L_F=\emptyset$.  Moreover, on the other side of $F$, there is a corresponding component $F'$ of $\partial_hN(B_l'')$ of type (2) such that $\pi(F)\cap\pi(F')\ne\emptyset$ in the branched surface $B_l''$.  Adding $S_{l+1}$ to $B_l''$ does not affect $F'$, so we may also view $F'$ as a component of $\partial_hN(B_{l+1}')$.  Next we choose good arcs for both $H$ and $F'$.  

First note that since the original sutured manifold decomposition is well-groomed, $\partial S_{l+1}$ is homologically nontrivial in $H_1(F,\partial F)$.  There is a simple closed curve $\eta$ in $F$ transverse to $S_{l+1}$, as shown in Figure~\ref{Fgood} (note that the arrows in Figure~\ref{Fgood} on $\partial S_{l+1}$ denote the branch direction at $\partial S_{l+1}$), such that the algebraic intersection number of $\eta$ and $\partial S_{l+1}$ is equal to $|\eta\cap\partial S_{l+1}|$ (this is equivalent to saying that the normal direction of $\partial S_{l+1}$ at $\eta\cap\partial S_{l+1}$, induced from the branch direction of $B_{l+1}'$, are coherent along $\eta$).

Recall that $H$ can be viewed as the union of one side of $S_{l+1}$ and $F\setminus \partial S_{l+1}$.  We first consider the components $\theta_1,\dots,\theta_p$ of $\partial H\cap\partial M$ that are not in $F$ (i.e., each $\theta_i$ can be viewed as a component of $\partial S_{l+1}\cap\partial M$).  We can find an arc $\gamma_i$ connecting $\theta_i$ to the internal boundary of $H$ such that $\gamma_i$ either is totally in (one side of) $S_{l+1}$ or consists an arc in $S_{l+1}$ and an arc in $F$ parallel to a subarc of $\eta$.  Moreover, we can choose these arcs $\gamma_i$ to be disjoint in $H$.

Now we consider the components of $\partial F\cap\partial M$ (viewed as components of $\partial H\cap\partial M$).  It is easy to see from our construction that there is a collection of disjoint good arcs $\alpha_1,\dots,\alpha_q$ in $F$ (see the arcs $\alpha_1$ and $\alpha_2$ in Figure~\ref{Fgood}(a)), such that (1) these arcs $\alpha_j$ connect each component of $\partial F\cap\partial M$ to a component of $\partial S_{l+1}$, and (2) these arcs $\alpha_j$ are disjoint from the curve $\eta$ describe above.  

If follows from our construction that these arcs $\gamma_i$ and $\alpha_j$ form a set of good arcs $\Gamma_H$ for $H$.

Next we consider the component $F'$ of $\partial_hN(B_l'')$ on the other side of $F$.  So $F'$ is a type (2) component of $\partial_hN(B_l'')$, and we may view $F'$ as a component of $\partial_hN(B_{l+1}')$.  Moreover, we view $F'$ as a parallel copy of $F$ and view the curves $\partial S_{l+1}$, $\eta$ and $\alpha_j$ described above as curves in $F'$.  We have two slightly different situation.  The first is that $F'$ (and hence $F$) has nonempty internal boundary, and the second is that $F'$ has no internal boundary.

If $F'$ has nonempty internal boundary, then there are arcs $\beta_1,\dots,\beta_r$ in $F'$ (see the arcs $\beta_1$ and $\beta_2$ in Figure~\ref{Fgood}(a)), such that (1) these arcs $\beta_k$ connect each component of $\partial F'\cap\partial M$ to the internal boundary of $F'$, and (2) these arcs $\beta_k$ are disjoint from $\eta$, $\partial S_{l+1}$ and the arcs $\alpha_j$.  These arcs $\beta_k$ form a set of good arcs $\Gamma_{F'}$ for $F'$.  Moreover, since each $\beta_k$ is disjoint from $\eta$ and the arcs $\alpha_j$, the projections $\pi(\Gamma_H)$ and $\pi(\Gamma_{F'})$ of the good arcs $\Gamma_H$ and $\Gamma_{F'}$ for $H$ and $F'$ respectively are disjoint in $B_{l+1}'$.

If $F'$ does not have internal boundary (in which case $F'$ must be a Seifert surface of the knot exterior), then as shown in Figure~\ref{Fgood}(b), there is an arc $\beta$ properly embedded in $F'$ such that (1) $\beta$ is disjoint from $\eta$ and the arcs $\alpha_j$ and (2) the intersection of $\beta$ with $\partial S_{l+1}$ is minimal up to isotopy.  Since the original sutured manifold in well-groomed, the requirement (2) implies that the algebraic intersection number of $\beta$ and $\partial S_{l+1}$ is equal to $|\beta\cap\partial S_{l+1}|$.  Thus $\beta$ is a good arc for $F'$.  Since $\beta$ is chosen to be disjoint from $\eta$ and each $\alpha_j$, the projections of $\pi(\beta)$ and $\pi(\Gamma_H)$ on $B_{l+1}'$ are disjoint.

\begin{figure}
\begin{center}
\psfrag{a}{(a)}
\psfrag{b}{(b)}
\psfrag{c}{$\alpha_1$}
\psfrag{d}{$\beta$}
\psfrag{e}{$\alpha_1$}
\psfrag{f}{$\beta_1$}
\psfrag{g}{$\alpha_2$}
\psfrag{h}{$\beta_2$}
\psfrag{i}{$\eta$}
\psfrag{j}{$\partial S_{l+1}$}
\psfrag{k}{internal boundary}
\includegraphics[width=4.7in]{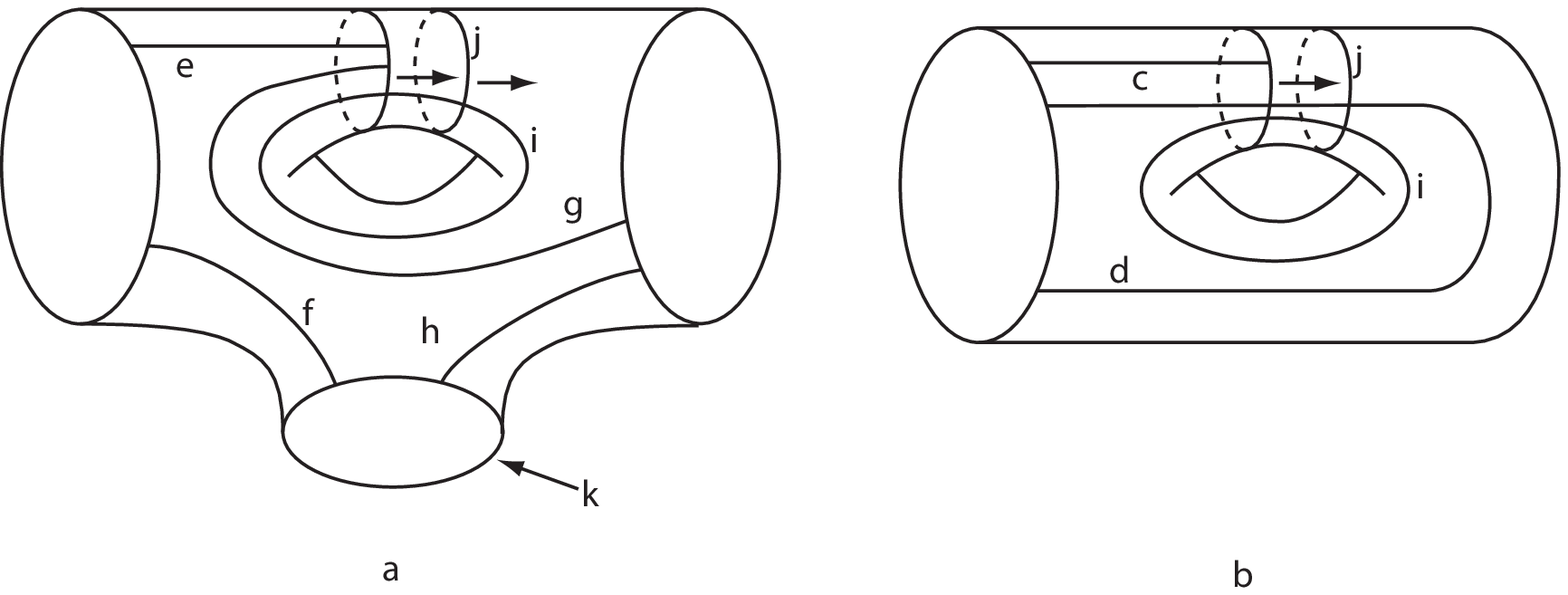}
\caption{}\label{Fgood}
\end{center}
\end{figure}

\vspace{10pt}
\noindent
\emph{Case (b)}. $H$ is a component of $\partial_hN(B_l'')$
\vspace{10pt}

In this case, either $L_H$ is unchanged by the decomposition by $S_{l+1}$ or $H$ is the surface $F'$ of type (2) considered in Case (a).  In Case (a), we have already constructed a set of good arcs for the type (2) surface $F'$, so we may assume that $L_H$ is unchanged by the decomposition by $S_{l+1}$.  Since $H$ (viewed as a component of $\partial_hN(B_l'')$) is good in $B_l''$, $H$ is good in $B_{l+1}'$.  Furthermore, the projections of the good arcs in Case (a) and the good arcs (from the induction) of $H$ in this case are disjoint in $B_{l+1}'$.

So $B_{l+1}'$ is good.
It remains to show that $B_{l+1}'$ does not carry any torus.   Since $B_l'$ does not carry any torus and $B_l''$ can be obtained by splitting $B_l'$, $B_l''$ does not carry any torus.  Suppose $B_{l+1}'$ carries a torus $T$. Then $T$ can be expressed as the union of some copies of $S_{l+1}$ and a surface in $N(B_l'')$ transverse to the $I$-fibers.  Moreover, the transverse orientation of the branched surface induces a compatible normal orientation for $T$.  Since the original sutured manifold decomposition sequence is well-groomed, $\partial S_{l+1}\cap R_\pm(\gamma_l)$ is a collection of homologically nontrivial curves in $H_1(R_\pm(\gamma_l), \partial R_\pm(\gamma_l))$.  Thus there is a component $F$ of $\partial_hN(B_l'')$, such that $T\cap F$ (with the induced orientation) is homologically nontrivial in $F$.  However, since $T$ is a torus in $S^3$, $T$ is homologically trivial and this is impossible.

Therefore, $B_{l+1}'$ satisfies properties (1)-(4) of the lemma and we can inductively construct the  sutured manifold hierarchy and corresponding sequence of branched surfaces as claimed.
\end{proof}

\subsection{Step 2: Adding product disks}\label{step2}

Let $B_n'$ be the good branched surface constructed in the proof of  Lemma~\ref{L2}.  It follows from the conditions on the sutured manifold hierarchy  and our construction above that $\partial B_n'$ consists of circles of slope $0$ in the torus $\partial M$.  In this section, we will add some product disks and modify $B_n'$ to get a laminar branched surface carrying more laminations.

As $M\setminus \Int(N(B_n'))$ is a product, we may suppose $M\setminus \Int(N(B_n'))=S\times I$, where $S$ is a compact and possibly disconnected surface.
Let $S_+=S\times \{0\}$ and $S_-=S\times\{1\}$.  So $\partial_hN(B_n')=S_+\cup S_-$. It is possible to decompose $S\times I$ as the disjoint union 
$$S\times I=(F\times I)\cup (G\times I)\, ,$$
where $F$ is the union of the components of $S$ without internal boundary.  Thus $\partial F\subset\partial M$ and each component of $G$ has nonempty internal boundary.  Moreover, each component of $F$ must be a Seifert surface in the knot exterior.  Note that, since we take parallel copies of surfaces in the horizontal boundary in each step of the sutured manifold decompositions (see Lemma~\ref{L2}), $F\ne\emptyset$.  Furthermore, $G=\emptyset$ only if $k$ is fibered.

Let $m=|\partial S_\pm\cap\partial M|$ be the number of components of the non-internal boundary $S_\pm\cap\partial M$. 
Since $B_n'$ is good, there is a collection of pairwise disjoint good arcs in $S_+$, denoted by $\eta_1,\dots, \eta_m$, and a collection of pairwise disjoint good arcs in $S_-$, denoted by $\delta_1,\dots, \delta_m$, such that 
$\pi(\bigcup_i\eta_i)\cap\pi(\bigcup_i\delta_i)=\emptyset $ (in $B_n'$) and  
each component of $\partial S_\pm\cap \partial M$ has exactly one incident good arc $\eta_i$ and one incident good arc $\delta_i$ attached to it.   
After relabeling as necessary, we may assume that $\eta_i$ and $\delta_i$, $1\le i\le r$, lie in $F\times\{0,1\}$ while $\eta_i$ and $\delta_i$, $r+1\le i\le m$, lie in $G\times\{0,1\}$.
It follows that each $\eta_i$ and each $\delta_i$, $1\le i\le r$, has both endpoints lying on $\partial M$ while each of $\eta_i$ and $\delta_i$, $r+1\le i\le m$, has exactly one endpoint lying on $\partial M$. 

Consider first $F\times [0,1]$. Recall that each component of $F$ is a Seifert surface of the knot exterior. Let $F_1$ be any component of $F$ and relabel as necessary so that
$\eta_1\subset F_1\times \{0\}$ and $\delta_1\subset F_1\times \{1\}$. By 
\cite[Lemma 4.4]{R1}, there is a sequence of simple arcs 
$$\alpha_0=\eta_1, \alpha_1,...,\alpha_l=\delta_1$$
such that $\alpha_i\cap\alpha_{i+1}=\emptyset$ and a regular neighbourhood of 
$\alpha_i\cup\alpha_{i+1}\cup\partial F_1$ in $F_1$ is a twice-punctured torus for each $i, 1\le i\le l$. 
For $1\le i\le l$, let $F_1$ induce a consistent orientation on each $F_1\times \{\frac{i}{l+1}\}$
and orient the disks $\alpha_i\times  [\frac{i}{l+1},\frac{i+1}{l+1}]$ arbitrarily.
Add branch sectors to $B_n'$ as prescribed by the following sequence of sutured manifold decompositions:
$$(M'_n,\gamma_n')\overset{A}{\rightsquigarrow} (M'_{n+1},\gamma^{\prime}_{n+1})\overset{B}{\rightsquigarrow} (M_{F_1},\gamma_{F_1})\,\, ,$$
where $A=F_1\times\{\frac{1}{l+1},...,\frac{l}{l+1}\}$ and $B=\bigcup_i (\alpha_i\times [\frac{i}{l+1},\frac{i+1}{l+1}])$. Repeat for each remaining component of $F$ and let $(M_F,\gamma_F)$ denote the resulting sutured manifold. Set $B_F=B_{(M_F,\gamma_F)}$. Notice that the conditions satisfied by the arcs $\alpha_i$ guarantee that $B_F$ is laminar.

Now consider $G\times I$. Let  $G_1$  be a component of $G$ and let $p=|\partial G_1\cap\partial M|$. 
Let $\{C_1,...,C_p\}$ be a listing of the  components of $G_1\cap\partial M$.
After relabeling as necessary, we may assume $\eta_{r+1},...,\eta_{r+p}$ lie in $G_1\times\{0\}$ and
$\delta_{r+1},...,\delta_{r+p}$ lie in $G_1\times\{1\}$, with $\{\eta_{r+i}(0),\delta_{r+i}(0)\}\subset C_i$
for each $1\le i\le p$.

\begin{lemma}
Let $\{\alpha_1,...,\alpha_p\}$ and $\{\beta_1,...,\beta_p\}$ each be a set of pairwise disjoint arcs 
properly embedded in $G_1$ with 
$\{\alpha_i(0),\beta_i(0)\}\subset C_i$ and $\{\alpha_i(1),\beta_i(1)\}\subset \partial G\setminus\{C_1,...,C_p\}$, the internal boundary of $G_1$. Let $s=|(\cup_i\alpha_i)\cap(\cup_i\beta_i)|$. Then either $s=0$ or there is a set 
$\{\gamma_1,...,\gamma_p\}$ of pairwise disjoint arcs properly embedded in $G_1$
with $\gamma_i(0)\in C_i$, $\gamma_i(1)\in\partial G_1\setminus \{C_1,...,C_p\}$, and such that
$$max\{|(\cup_i\alpha_i)\cap (\cup_i\gamma_i)|,|(\cup_i\beta_i)\cap (\cup_i\gamma_i)|\}< s\, .$$
\end{lemma}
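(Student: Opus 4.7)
My plan is to construct the family $\gamma_1,\dots,\gamma_p$ by performing a coherent smoothing of $\bigcup_i\alpha_i\cup\bigcup_i\beta_i$ at every intersection point. At each $x\in\alpha_i\cap\beta_j$, resolve the crossing by pairing the $C_i$-directed edge-end of $\alpha_i$ with the internal-boundary-directed edge-end of $\beta_j$ (and the two remaining edge-ends with each other); call this the \emph{$S$-smoothing} at $x$. The graph $\bigcup\alpha\cup\bigcup\beta$ is thereby replaced by a pairwise disjoint union of embedded smoothed arcs (and possibly some closed loops) in $G_1$. Because the $S$-smoothing rule always pairs a $C$-direction with an internal-boundary-direction, a smoothed arc that enters a smoothing vertex from the $C$-side of one arc exits on the internal-boundary-side of the other; hence every smoothed arc beginning at a $C$-endpoint of $\bigcup\alpha\cup\bigcup\beta$ terminates at an internal-boundary endpoint. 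Each $C_i$ carries exactly two such smoothed arcs (emanating from $\alpha_i(0)$ and $\beta_i(0)$); I select one per $C_i$ to be $\gamma_i$.

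I realize $\gamma_i$ as a genuinely embedded arc in $G_1$ by a small parallel pushoff to one consistent side. Along any straight $\alpha$- or $\beta$-subsegment of $\gamma_i$ between two consecutive turns (smoothings that $\gamma_i$ visits), the interior is by construction disjoint from the opposite family (its endpoints being consecutive crossings on that arc), so the parallel pushoff contributes no crossings with $\bigcup\alpha\cup\bigcup\beta$ along the segment. At each turn the local contribution depends on the side of the pushoff: on the \emph{inner} side (inside the corner cut off by the smoothing) the pushoff introduces no local crossings, while on the \emph{outer} side it picks up exactly one transverse crossing with each of $\bigcup\alpha$ and $\bigcup\beta$ in a neighbourhood of the turn. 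Since the side chosen at any single turn determines the pushoff side along the rest of $\gamma_i$, I choose the side of each $\gamma_i$ so that its outermost turn (the turn nearest $C_i$) is inner; this is a free choice, so whenever $\gamma_i$ has any turn at all, at least one of its turns is inner.

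Let $T_i$ denote the number of turns of $\gamma_i$, $T=\sum_i T_i$, and $T'=|\{i:T_i\ge 1\}|$. Distinct turns correspond to distinct points of $\bigcup\alpha\cap\bigcup\beta$, so $T\le s$. By the local analysis above,
\[
\Bigl|\bigcup_i\gamma_i\cap\bigcup_i\alpha_i\Bigr|=\sum_i\#\{\text{outer turns on }\gamma_i\}\le T-T',
\]
and the same bound holds for $|\bigcup\gamma\cap\bigcup\beta|$. When $s>0$, either $T=0$, in which case the bound is $0<s$, or $T\ge 1$, in which case $T'\ge 1$ and hence $T-T'\le T-1<s$. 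Either way, $\max\bigl\{|\bigcup\alpha\cap\bigcup\gamma|,\,|\bigcup\beta\cap\bigcup\gamma|\bigr\}<s$, as required. The delicate points that must be checked carefully are the local intersection counts at inner versus outer turns, and the verification that the side-choice at the outermost turn of each $\gamma_i$ can always be arranged to be inner.
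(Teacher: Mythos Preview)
Your global-smoothing approach is quite different from the paper's, but as written it contains a real gap. The assertion ``distinct turns correspond to distinct points of $\bigcup\alpha\cap\bigcup\beta$, so $T\le s$'' is not justified and is in fact false: at each crossing the smoothing produces \emph{two} local corner pieces, and nothing prevents both from lying on selected arcs. For a concrete failure, take $p=2$ and suppose $\alpha_1$ meets $\beta_2$ in exactly two points $x,y$ of the same sign (same order along both arcs), while $\alpha_2,\beta_1$ are disjoint from everything. The smoothed arc from $\alpha_1(0)$ is $\alpha_1[0,x]\cup\beta_2[x,y]\cup\alpha_1[y,1]$ and the one from $\beta_2(0)$ is $\beta_2[0,x]\cup\alpha_1[x,y]\cup\beta_2[y,1]$; each has two turns. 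If you select these as $\gamma_1,\gamma_2$, then $T=4>2=s$. Worse, because the two crossings have equal sign, the two turns on each $\gamma_i$ lie on opposite sides, so every pushoff has exactly one outer turn; this forces $|\bigcup\gamma\cap\bigcup\alpha|=|\bigcup\gamma\cap\bigcup\beta|=2=s$, and the strict inequality fails for this selection. Since you never specify \emph{which} smoothed arc to pick at each $C_i$, nor prove that some choice gives $T\le s$, the argument does not close. (In this example the lemma's conclusion is of course salvageable by choosing $\gamma_1=\beta_1$, $\gamma_2=\alpha_2$, but your proof does not locate such a choice in general.)

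By contrast, the paper's argument is a one-move surgery. Pick the intersection point $z=\alpha_1(t_0)=\beta_j(t_1)$ furthest along $\alpha_1$, set $\gamma_j=\beta_j[0,t_1]\star\alpha_1[t_0,1]$ (slightly perturbed), and $\gamma_i=\beta_i$ for $i\ne j$. Since $\alpha_1(t_0,1]$ misses $\bigcup\beta$ by the choice of $z$, one gets $|\bigcup\gamma\cap\bigcup\beta|=0$ immediately, while $|\bigcup\gamma\cap\bigcup\alpha|\le s-1$ because the point $z$ has been removed. This avoids the selection and pushoff bookkeeping entirely.
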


\begin{proof}
Suppose $s\ne 0$. Relabeling as necessary, we may assume that $\alpha_1\cap(\cup_i\beta_i)\ne\emptyset$. Choose $z$ to be the point
in $\alpha_1\cap (\cup_i\beta_i)$ that is furthest along $\alpha_1$. So there are $j, t_0, t_1$ such that 
$z=\alpha_1(t_0)=\beta_j(t_1)$ and $\alpha_1(t_0,1]\cap (\cup_i\beta_i)=\emptyset$. Let $\gamma_j = \beta_j[0,t_1]\star\alpha_1[t_0,1]$ be the concatenation of the two arcs $\beta_j[0,t_1]$ and $\alpha_1[t_0,1]$, perturbed slightly so that it intersects $\alpha_1$ transversely and minimally. For $i\ne j$, set $\gamma_i=\beta_i$. Then $ |(\cup_i\alpha_i)\cap(\cup_i\gamma_i)| < |(\cup_i\alpha_i)\cap(\cup_i\beta_i)|$
and $|(\cup_i\gamma_i)\cap(\cup_i\beta_i)|= 0$. 

\end{proof}

As an immediate corollary, we have:

\begin{corollary}
There are sets of arcs $\mathcal A_i=\{\alpha_1^i,...,\alpha_p^i\}$, $1\le i\le q$, such that 
\begin{enumerate}
\item for each $i$, the arcs in $\mathcal A_i$ are pairwise disjoint and properly embedded in $G_1$,
$\alpha_j^i(0)\in C_j$, and $\alpha_j^i(1)\in \partial G_1\setminus\{C_1,...,C_p\}$, $j=1,\dots,p$,
\item $\mathcal A_0=\{\eta_{r+1},...,\eta_{r+p}\}$ and $\mathcal A_{q+1}=\{\delta_{r+1},...,\delta_{r+p}\}$,
\item for each $i$, $(\cup_j \alpha_j^i)\cap (\cup_j \alpha_j^{i+1})=\emptyset$.
\end{enumerate}
\end{corollary}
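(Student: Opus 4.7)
The plan is to iterate the preceding lemma. Let $s = |(\cup_j \eta_{r+j}) \cap (\cup_j \delta_{r+j})|$. If $s=0$, simply take $q=0$, so that $\mathcal A_0 = \eta := \{\eta_{r+1},\ldots,\eta_{r+p}\}$ and $\mathcal A_{q+1} = \mathcal A_1 = \delta := \{\delta_{r+1},\ldots,\delta_{r+p}\}$; conditions (1)--(3) are then immediate.

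Suppose $s > 0$. First I would observe that the proof of the preceding lemma actually establishes the slightly stronger conclusion $|(\cup_i \gamma_i)\cap(\cup_i\beta_i)| = 0$, not merely that this is $< s$; that is, the new collection is in fact disjoint from $\beta$. Using this, apply the lemma with $\alpha = \eta$ and $\beta = \delta$ to produce a pairwise disjoint collection $\gamma^{(1)}$, properly embedded in $G_1$ with the prescribed endpoint structure, such that $\gamma^{(1)} \cap \delta = \emptyset$ and $|\gamma^{(1)} \cap \eta| < s$. Then iterate: having built $\gamma^{(k)}$ with $|\gamma^{(k)} \cap \eta| > 0$, apply the lemma to the pair $(\eta, \gamma^{(k)})$ to obtain $\gamma^{(k+1)}$ with $\gamma^{(k+1)} \cap \gamma^{(k)} = \emptyset$ and $|\gamma^{(k+1)} \cap \eta| < |\gamma^{(k)} \cap \eta|$. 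Since this intersection number is a strictly decreasing nonnegative integer, the process terminates after some finite number $q$ of steps with $|\gamma^{(q)} \cap \eta| = 0$. Define $\mathcal A_i := \gamma^{(q-i+1)}$ for $1 \le i \le q$, together with the prescribed $\mathcal A_0 = \eta$ and $\mathcal A_{q+1} = \delta$.

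Verification of conditions (1)--(3) should be routine: condition (1) is inherited from the lemma at each step, condition (2) holds by construction, and condition (3) follows from the disjointness relations produced at each iteration, with the final relation $\mathcal A_0 \cap \mathcal A_1 = \emptyset$ being precisely the stopping criterion. I do not expect any substantive obstacle here; the only real subtlety is extracting the strengthened disjointness conclusion from the proof of the preceding lemma rather than just its stated form. If one prefers to avoid this strengthening, one may instead argue by induction on $s$, splicing together sequences obtained for the pairs $(\eta, \gamma)$ and $(\gamma, \delta)$, where $\gamma$ is the single collection produced by one application of the lemma; this uses only the literal statement of the lemma.
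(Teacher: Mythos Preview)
Your argument is correct. The paper gives no proof at all, declaring the statement an ``immediate corollary'' of the preceding lemma, so you are filling in details that the authors omit. Both routes you describe work: your primary argument (iterating with the strengthened conclusion $|(\cup_i\gamma_i)\cap(\cup_i\beta_i)|=0$, which is indeed what the lemma's proof establishes) and your alternative (induction on $s$ using only the literal statement, splicing two shorter sequences at the intermediate collection $\gamma$) are each valid, and the latter is likely what the authors had in mind as the ``immediate'' deduction.
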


For $1\le i\le q$, let $G_1$ induce a consistent  orientation on each $G_1\times \{\frac{i}{q+1}\}$. 
Orient the disks $\alpha_j^i\times  [\frac{i}{q+1},\frac{i+1}{q+1}]$ so that the orientation induced on their boundaries agrees with the orientation of $\alpha_j^i$ (which is the orientation from its starting point in $\partial M$ to its ending point in the internal boundary). 
Add branch sectors to $B_F$ as given by the following sequence of sutured manifold decompositions:
$$(M_F,\gamma_F)\overset{A}{\rightsquigarrow} (M'_F,\gamma^{\prime}_F)\overset{B}{\rightsquigarrow} (M_{G_1},\gamma_{G_1})\,\, ,$$
where $A=G_1\times\{\frac{1}{q+1},...,\frac{q}{q+1}\}$ and $B=\bigcup_{i,j} (\alpha_j^i\times [\frac{i}{q+1},\frac{i+1}{q+1}])$. Repeat for each remaining component of $G$ and let $(M_G,\gamma_G)$ denote the resulting sutured manifold. Set $B_G=B_{(M_G,\gamma_G)}$. Notice that the conditions satisfied by the arcs $\alpha^i_j$ guarantee that $B_G$ is laminar.

By Lemma~\ref{L2}, $B_n'$ does not carry any torus.
Therefore, any branched surface obtained by splitting $B_n'$ also cannot carry a torus. 
And finally, any (closed) torus carried by $B_G$ but not this splitting of $B_n'$ would necessarily pass through one of the added disk branches and hence
 would necessarily have nonempty boundary. Thus $B_G$ does not carry a torus.

Noting that for each product disk in the above construction, its two normal directions give two ways of deforming it into a branched surface, let $B_G'$ denote the branched surface obtained from $B_G$ by reversing the orientations
of the disks $\alpha_j^i\times [\frac{i}{q+1},\frac{i+1}{q+1}]$. Notice that $B_G'$ is also laminar, has only product complementary regions, and does not carry a torus.

Hence we have laminar branched surfaces  $B_G$ and $B_G'$ with only product complementary regions and which do not carry a torus. We may therefore apply 
Theorem~\ref{T1} to conclude the existence of taut foliations realizing any boundary slope carried by $B_G\cap\partial M$ or 
$B_G'\cap\partial M$. It remains to compute these
boundary slopes.

\subsection{The boundary train tracks}

Let $\tau$ denote the train track $B_G\cap\partial M$  and let $\tau'$ denote the train track
$B_G'\cap\partial M$.

\begin{lemma}
$\tau$ and $\tau'$ together realize all slopes in $(-a,b)$ for some $a, b>0$.
\end{lemma}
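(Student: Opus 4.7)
The plan is to analyze the train tracks $\tau$ and $\tau'$ explicitly on the boundary torus $\partial M$ and compute the set of slopes each realizes.

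I would begin by identifying the structure of $\tau = B_G \cap \partial M$. By Lemma~\ref{L2}(1), $\partial B_n' \cap \partial M$ is a union of parallel circles of slope $0$ on $\partial M$, which I will call longitudes. Each product disk added in the Step~2 construction meets $\partial M$ in transverse arcs: a disk $\alpha_j^i \times [\tfrac{i}{q+1}, \tfrac{i+1}{q+1}]$ from the $G$ construction contributes one arc on $\partial M$ (since $\alpha_j^i$ has exactly one endpoint on $\partial M$), while a disk $\alpha_i \times [\tfrac{i}{l+1}, \tfrac{i+1}{l+1}]$ from the $F$ construction contributes two arcs (since $\alpha_i$ has both endpoints on $\partial M$). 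Each such arc runs from one longitude to an adjacent one. Thus $\tau$ is a train track on the torus whose underlying graph is a family of parallel longitudes connected by transverse arcs, with the smoothing at each switch dictated by the chosen orientation of the corresponding product disk.

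Next I would observe that passing from $B_G$ to $B_G'$ reverses the orientation of every product disk, and therefore flips the smoothing at every switch of $\tau$ simultaneously. Hence $\tau'$ is obtained from $\tau$ by reversing every switch smoothing. Since the set of slopes carried by a train track on the torus forms a closed interval in $\mathbb{Q} \cup \{\infty\}$, and since both $\tau$ and $\tau'$ contain the longitudinal circles, both realize slope $0$. I would then verify that the orientations chosen in Section~\ref{step2} are coherent in the following sense: relative to a fixed orientation of $\partial M$, every switch of $\tau$ smooths weight toward the same meridional side. Indeed, the disks $\alpha_j^i \times [\cdot,\cdot]$ for the $G$ construction are explicitly oriented so that the boundary orientation agrees with the chosen orientation of $\alpha_j^i$, and the arcs in $F\times I$ inherit a compatible orientation from the consistent orientation on the surfaces $F_1 \times \{\tfrac{i}{l+1}\}$. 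Assembling a positive weight on a chain of vertical arcs from a bottom longitude to a top longitude together with weights on the longitudes produces a simple closed curve whose slope is a positive rational number, so by convexity of the set of realized slopes, $\tau$ realizes a closed interval $[0,b]$ for some $b>0$. By the same argument applied to $\tau'$, which has all switches flipped, $\tau'$ realizes $[-a,0]$ for some $a>0$. Together they realize $[-a,b] \supset (-a,b)$.

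The main obstacle will be the coherence check: confirming that the branch direction at every switch of $\tau$ points consistently to one meridional side. This reduces to tracking the branch directions at $\partial S_{l+1}$ through the inductive construction of Lemma~\ref{L2} (cf.\ the arrows in Figure~\ref{Fgood}) and combining them with the explicit orientation conventions fixed in Section~\ref{step2}. The strict positivity $a,b > 0$ requires the presence of at least one vertical arc in $\tau$, which is guaranteed because $F \ne \emptyset$ and each component of $\partial S_\pm \cap \partial M$ has an incident good arc $\eta_i$ or $\delta_i$ by the construction of the good arc system in Step~2.
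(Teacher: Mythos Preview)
Your overall plan---describe $\tau$ as longitudes joined by vertical arcs, then assign weights and compute slopes---is the right shape, and matches the paper's approach. However, your key ``coherence check'' fails, and this is a genuine gap.

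You claim that every switch of $\tau$ smooths toward the same meridional side. This is false for the vertical arcs coming from the $F$-type product disks. A disk $\alpha_i\times[\tfrac{i}{l+1},\tfrac{i+1}{l+1}]$ in the $F$ construction has both endpoints of $\alpha_i$ on $\partial M$, so it contributes \emph{two} vertical arcs to $\tau$. Since the disk carries a single transverse orientation, its oriented boundary traverses these two arcs in opposite directions along the meridian; hence the two resulting switches smooth in \emph{opposite} meridional directions. This is exactly why, as the paper observes, $\tau$ and $\tau'$ restricted to an $F$-annulus $A_F$ are identical: reversing the disk orientation just swaps the roles of the two arcs. Consequently your description of $\tau'$ as ``$\tau$ with every switch flipped'' is also incorrect---only the $G$-type switches flip.

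Once coherence fails, your convexity argument no longer yields $b>0$ (or $a>0$) directly: with arcs pointing both ways in each $F$-piece, one must actually check that positive and negative slopes are achievable with strictly positive weights satisfying all switch conditions. The paper handles this by an explicit measure: it assigns weight $x$ to each $G$-arc, and weights $x+y$ and $y$ to the two oppositely-smoothed arcs in each $F$-piece (with longitudinal weights in $\{1,1+x,1+y,1+x+y\}$), then computes the slope via algebraic intersection numbers and lets $0<y\ll x$ vary. The asymmetric weighting of the two $F$-arcs is precisely what lets the net meridional contribution be controlled; for $\tau'$, the $G$-arcs reverse while the $F$-pieces are unchanged, and one reassigns which $F$-arc gets weight $x+y$ to push the slope to the other side of $0$. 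Your argument can be repaired along these lines, but the explicit weight computation is not optional.
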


\begin{proof}
Consider an annular component $A_G$ of $\partial G_1\times [\frac{i}{q+1},\frac{i+1}{q+1}]$. The train track $\tau$ (respectively $\tau'$) restricted to $A_G$ has the form indicated in Figure~\ref{ttpiece}.a  (respectively, Figure~\ref{ttpiece}.b). Similarly, consider an annular component $A_F$ of 
$\partial F_1\times [\frac{i}{l+1},\frac{i+1}{l+1}]$.  Recall that each $F_1\times \{\frac{i}{l+1}\}$ is a Seifert surface and the good arc for $F_1$ has both endpoints on the circle $\partial F_1$.  Thus both $\tau$ and $\tau'$ restricted to $A_F$ have the form indicated in Figure~\ref{ttpiece}.c.
Call all such non-longitudinal branches of $\tau$ or $\tau'$ {\it vertical}.

\begin{figure}
\begin{center}
\psfrag{a}{(a)}
\psfrag{b}{(b)}
\psfrag{c}{(c)}
\includegraphics[width=4in]{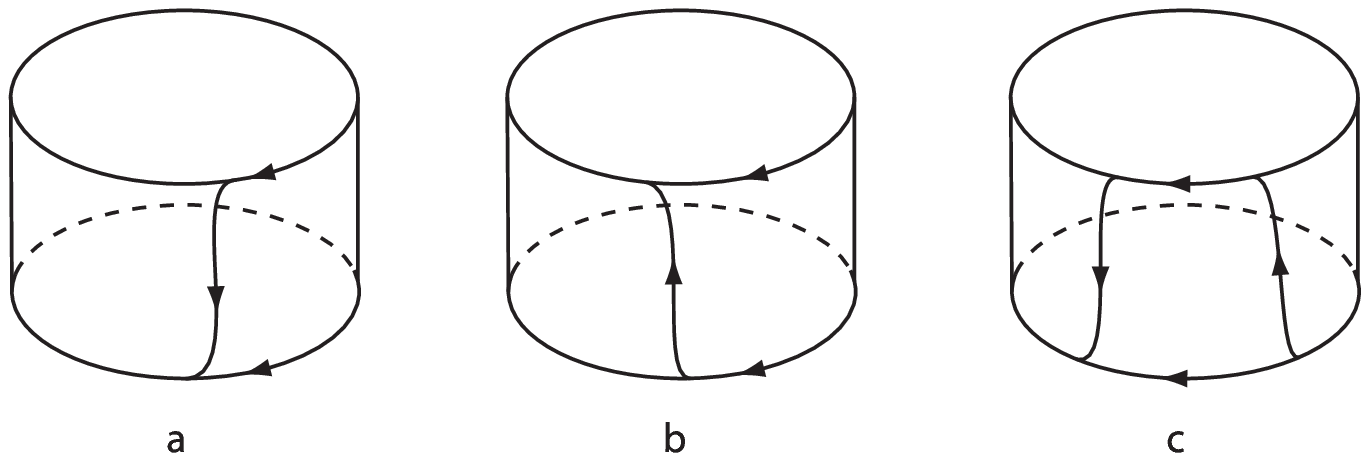}
\caption{}\label{ttpiece}
\end{center}
\end{figure}

Since all vertical branches of $\tau$ (or $\tau'$, respectively) are of one of the three types shown in Figure~\ref{ttpiece},
it follows that $\tau$ (or $\tau'$) is a train track obtained by concatenating pieces of the types of Figure~\ref{ttpiece}.a or c (or b or c, respectively).
Examples are shown in Figure~\ref{Ftrain2}. Notice that $\tau$ and $\tau'$ are orientable and measurable; namely, they admit a transverse measure (p. 66, \cite{Hatcher}, and p. 86, \cite{Penner}). Assign weights $x$, $y$, and $x+y$ to the vertical branches of $\tau$ and $\tau'$ as indicated in Figure~\ref{Ftrain2}; namely, vertical branches in $G\times I$ regions are weighted $x$, the compatibly oriented branches in $F\times I$ regions are weighted $x+y$, and the remaining branches in $F\times I$ regions are weighted $y$. 
Then assign weights from $\{1,1+x,1+y,1+x+y\}$ to the remaining branches of $\tau$ and $\tau'$ to obtain a measure $\mu$ on $\tau$ and a measure $\mu'$ on $\tau'$.

Recall that if $\gamma$ is a simple closed curve in a torus, then the slope of $\gamma$ is given in standard coordinates by
\begin{equation}\label{slope}
\mbox{slope}( \gamma)  = \frac{\langle \lambda,\gamma\rangle}{\langle \gamma,m\rangle},
\end{equation}
where $\langle ,\rangle$ denotes algebraic intersection number and $\lambda$ is the longitude and $m$ is the meridian of the knot
$k$ in $S^3$.

\begin{figure}
\begin{center}
\psfrag{x}{$x$}
\psfrag{y}{$y$}
\psfrag{z}{$x+y$}
\psfrag{1}{$1$}
\psfrag{X}{$1+x$}
\psfrag{Y}{$1+y$}
\psfrag{Z}{$1+x+y$}
\includegraphics[width=4in]{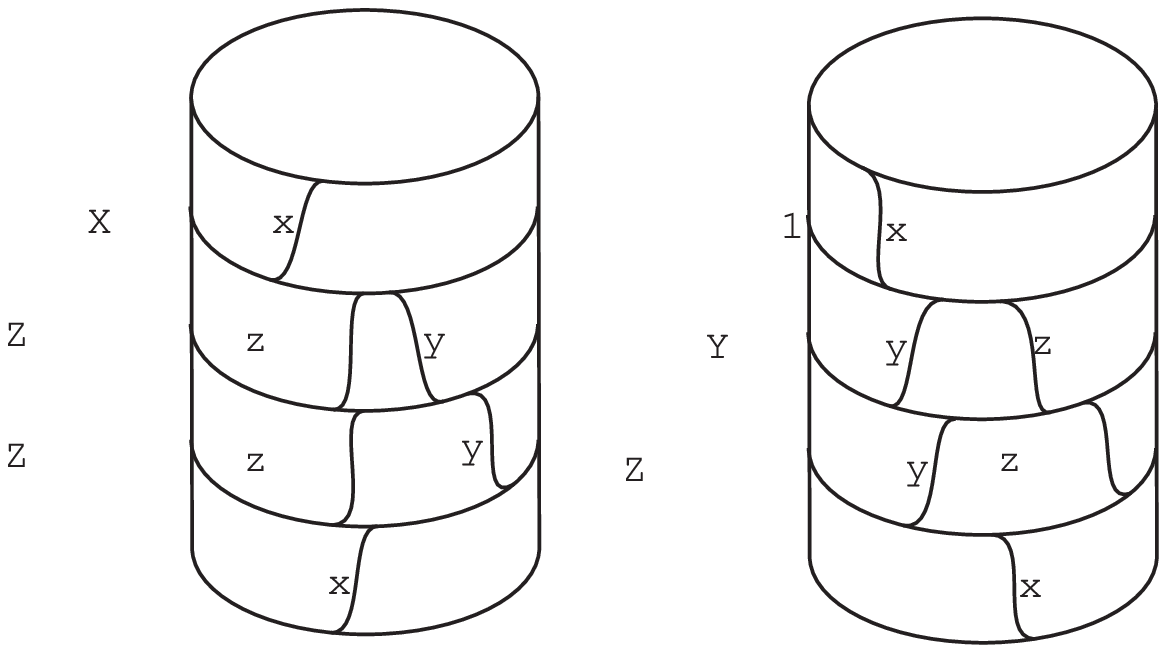}
\caption{}\label{Ftrain2}
\end{center}
\end{figure}

Applying (\ref{slope}) to the measured train tracks $(\tau,\mu)$ and $(\tau',\mu')$ while letting $x,y$ range over all values $0 < y << x$, we see that $(\tau,\mu)$ and $(\tau',\mu')$
together carry all boundary slopes in some open interval $(-a,b)$ about $0$.

\end{proof}

By Theorem~\ref{T1}, if $\tau$ (or $\tau'$) fully carries a curve of slope $s$, then $B_G$ (or $B_G'$, respectively) fully carries an essential lamination whose boundary consists of loops of slope $s$ in $\partial M$.  Moreover, this lamination extends to an essential lamination in $M(s)$.  Since $M\setminus \Int(N(B_G))$ and $M\setminus \Int(N(B_G'))$ consist of product regions, such essential laminations can be extended to taut foliations.  This proves Theorem \ref{Tmain}.

\end{document}